\documentclass[11pt]{article}

\usepackage{amssymb,amsfonts,amsmath}
\usepackage{amsthm}
\usepackage{algorithmic}
\usepackage{algorithm}
\usepackage{array}
\usepackage{booktabs}
\usepackage{caption} 
\usepackage{color}
\usepackage{empheq}
\usepackage{graphicx}
\usepackage{placeins}
\usepackage{lipsum}
\usepackage{mathrsfs}
\usepackage{multirow}
\usepackage{natbib}
\usepackage{textcomp}
\usepackage{stfloats}
\usepackage{subfigure}
\usepackage{url}
\usepackage{verbatim}
\usepackage[active]{srcltx}
\usepackage{tabularx}
\usepackage{setspace}
\usepackage{rotating}

\usepackage[margin=2.5cm,nohead]{geometry}
\theoremstyle{plain}
\newtheorem{theorem}{Theorem}
\newtheorem{lemma}{Lemma}
\newtheorem{corollary}{Corollary}
\newtheorem{proposition}{Proposition}
\theoremstyle{definition}

\newtheorem{example}{Example}
\newtheorem{assumption}{Assumption}
\theoremstyle{remark}
\newtheorem{remark}{Remark}

\DeclareMathOperator{\dist}{dist}
\DeclareMathOperator{\sign}{sign}
\newcommand{\alg}[1]{\textsc{#1}}

\graphicspath{{fig/}}

\begin{document}

\title{Error Bounds and a Smoothing Penalty Method for Binary Orthogonal Optimization}

\author{Ruikai Lv, Lianghai Xiao\thanks{Corresponding author. Email: \texttt{xiaolh@jnu.edu.cn}.}}

\date{}
\maketitle
\begin{abstract}
Binary orthogonal optimization arises in semantic hashing, orthogonal binary
projection, and Hadamard matrix design.  Existing methods 
often handle binarity and orthogonality separately, so their outputs
need not satisfy both constraints simultaneously and generally come without
a residual-based guarantee of exact feasibility.  Using an
exact box-constrained reformulation, we establish local and global Lipschitz
error bounds and prove that the Lipschitz exponent is optimal.  In particular,
when the binary orthogonal feasible set is nonempty, any box-constrained matrix
whose orthogonality residual is below $1$ admits a feasible sign rounding, and
the rounding distance is bounded by this residual.  These bounds also yield
a globally exact nonsmooth orthogonality-residual penalty over the box and a
uniformly sharp local penalty threshold.  We further develop a smoothing 
penalty method for binary orthogonal optimization (SPBO). 
We prove that every accumulation point is stationary for the limiting 
nonsmooth penalty problem. Numerical experiments on
orthogonal binary projection, supervised hashing, and Hadamard completion show
that SPBO substantially reduces orthogonality violations while retaining
competitive objective values or retrieval accuracy.  
\end{abstract}
\noindent

{\bf Keywords:} binary orthogonal optimization, exact penalty method,
projected gradient method, error bound, sign rounding \\
{\bf AMS subject classifications:} 90C26, 90C30, 65K05.

%
\section{Introduction}
Binary representations reduce storage and permit efficient bitwise
computation in image retrieval, signal processing, and structured
matrix design.  In many of these applications, the binary variables are also
required to be orthogonal, which removes redundancy between columns and
improves the conditioning of subsequent retrieval or recovery procedures.
Given integers $1\le r\le n$ and a continuously differentiable, possibly
nonconvex function $f:\mathbb R^{n\times r}\to\mathbb R$, we consider
\begin{equation}\label{eq:BO}
 \min_{X\in\mathbb R^{n\times r}} f(X)
 \qquad\mathrm{s.t.}\qquad
 X\in\mathcal F_{\rm BO}:=
 \{X\in\{-1,1\}^{n\times r}:X^\top X=nI_r\}.
\end{equation}
The simultaneous presence of binary and orthogonality constraints makes
\eqref{eq:BO} a combinatorial optimization problem. The following examples
illustrate three representative instances.

\begin{example}[Hash learning and image retrieval]
In binary hashing, a code matrix is learned by matching semantic or pairwise
similarities between samples.  A representative model is
\[
 \min_{X\in\mathcal F_{\rm BO}}\ell(X;\mathcal S),
\]
where $\mathcal S$ is constructed from class labels or other similarity
information and $\ell$ is a task-dependent loss
\cite{weiss2008spectral,liu2014discrete,jin2021unsupervised,
sun2023hierarchical}.  Binary feasibility gives compact codes, while
orthogonality promotes decorrelated hash bits.
\end{example}

\begin{example}[Hadamard matrix construction with prescribed entries]
When $r=n$, $\mathcal F_{\rm BO}$ is the set of Hadamard matrices \cite{hedayat1999orthogonal}
of order $n$.  Given observed entries $\widetilde X_{ij}$ on an index set
$\mathcal I$, a completion model is
\[
 \min_{X\in\mathcal F_{\rm BO}}
 \frac12\|\mathcal P_{\mathcal I}(X-\widetilde X)\|_F^2,
\]
where $\mathcal P_{\mathcal I}$ retains the entries indexed by $\mathcal I$
and sets all remaining entries to zero.
\end{example}

\begin{example}[Derandomized measurement design in compressed sensing]
Structured sensing matrices, such as subsampled Hadamard matrices, reduce
storage and random-bit requirements and permit fast matrix--vector products
\cite{li2015state,salari2018tdoa}.  For a prescribed row selector $P$, a
task-driven design model is
\[
 \min_{X\in\mathcal F_{\rm BO}}
 \ell\!\left(\frac1{\sqrt n}PX;\mathcal D\right),
\]
where $\mathcal D$ is training or calibration data and $\ell$ measures
sensing or reconstruction performance.
\end{example}

\subsection{Related work}

Existing methods for binary orthogonal optimization typically handle 
binarity and orthogonality in one of two ways: by relaxing the binary 
constraint or by decoupling the two requirements through auxiliary 
variables. Spectral hashing methods relax the binary constraint and 
obtain binary codes by thresholding the resulting continuous solutions 
\cite{weiss2008spectral}. Other methods retain binary variables but 
employ alternating updates, auxiliary variables, or penalty terms to make 
the resulting subproblems tractable \cite{liu2014discrete, 
gong2012iterative, shen2015supervised, xiong2021generalized}.  
These approaches are often inexpensive, but a final
rounding or block update may destroy orthogonality, and the returned matrix
is not guaranteed to satisfy both constraints simultaneously.

Guaranteeing exact feasibility from an approximate continuous solution
requires quantitative control of its distance to the binary orthogonal
feasible set.  Error-bound theory addresses this geometric question.  
Jiang et al.~\cite{jiang2023exact} and Qian et al.~\cite{qian2024error} 
established error bounds for nonnegative orthogonality constraints under 
different structural conditions. Chen et al.~\cite{chen2024tight} extended 
this analysis to sign-constrained Stiefel manifolds and showed that the 
error-bound exponent \(1/2\) is sharp in general when \(1<r<n\).

Xiao et al.~\cite{xiao2026relaxation} reformulated binary orthogonal optimization
over a restricted Stiefel manifold, enforcing orthogonality exactly while
penalizing box-constraint violations.  They established global exactness of
their penalty models.  Their method requires a manifold retraction at each
iteration, and their analysis does not bound the distance from a box-constrained
point to $\mathcal F_{\rm BO}$ in terms of its orthogonality residual.

We instead impose the box constraint exactly and move the orthogonality
residual into the objective.  The error bounds developed below provide both
an exactness threshold for this penalty and distance and sign-rounding
guarantees for approximate solutions.

\subsection{Contributions} 
The main contributions are twofold.

\begin{enumerate}
\item[i] We establish local and global Lipschitz error bounds for the exact
box-constrained reformulation and prove that the Lipschitz exponent is
optimal.  The asymptotic local error-bound constant $1/2$ is shown to be
uniformly sharp.  These bounds yield explicit sign-rounding certificates.
In particular, if $\mathcal F_{\rm BO}\ne\emptyset$, then every
$X\in[-1,1]^{n\times r}$ satisfying $\|X^\top X-nI_r\|_F<1$ obeys
\[
 \sign(X)\in\mathcal F_{\rm BO},
 \qquad
 \dist(X,\mathcal F_{\rm BO})
 \le\|X-\sign(X)\|_F
 \le\|X^\top X-nI_r\|_F.
\]
The error bounds also yield a globally exact nonsmooth
orthogonality-residual penalty with an explicit finite penalty
threshold, as well as a uniformly sharp local threshold.

\item[ii] We develop SPBO, a smoothing penalty method that requires only
box projections.  It combines bounded penalty continuation with
Huber--Moreau smoothing of the nonsmooth orthogonality-residual penalty.
We quantify the uniform smoothing error, establish
$O(\varepsilon^{-2})$ first-order complexity for each fixed smooth
subproblem, and prove stationarity of accumulation points for the limiting
nonsmooth penalty model under increasingly accurate subproblem solves.
Once the residual certificate is satisfied, sign rounding produces an
exactly feasible binary orthogonal matrix, whose objective value can be
further improved by a feasibility-preserving balanced-switch local
refinement.
\end{enumerate}

Numerically, SPBO markedly reduces orthogonality violations of rounded binary
outputs.  It achieves the lowest violation in all 15 projection configurations
and all 20 supervised-hashing configurations, improving on the next-best method
by factors of $3.24$--$1.04\times10^2$ and $33.7$--$885$, respectively.  In
Hadamard completion, it reaches the reported numerical-feasibility success
levels at lower observation ratios than the competing methods.  

\subsection{Organization}

Section~2 presents the basic notation, the exact continuous
reformulation and properties of its feasible set.  Section~3 develops the error bounds,
rounding guarantees, and exact-penalty consequences.  Section~4 presents the
smoothed penalty model, the projected-gradient continuation method, and its
convergence properties.  Section~5
reports the numerical experiments, and Section~6 concludes the paper.

\section{Preliminaries}
\label{sec:preliminaries}

Throughout the paper, the space $\mathbb R^{n\times r}$ is equipped with the trace inner product
$\langle X,Y\rangle=\operatorname{tr}(X^\top Y)$ and the Frobenius norm
$\|\cdot\|_F$.  We use $\|\cdot\|_1$ and $\|\cdot\|_\infty$ for the
vector $\ell_1$ and maximum norms, respectively,
$\|\cdot\|_2$ for the Euclidean norm of vectors and the spectral norm of
matrices, and
$\|X\|_{\max}:=\max_{i,j}|X_{ij}|$ for the entrywise maximum norm.
The space of $r\times r$ symmetric matrices is denoted by $\mathbb S^r$,
and $I_r$ is the identity matrix.  The $j$th column of $X$ is
$X_{\cdot j}$, and $X\circ Y$ denotes the Hadamard product.  For
$t\in\mathbb R$, define $\sign(t)=1$ if $t>0$, $\sign(t)=0$ if $t=0$,
and $\sign(t)=-1$ if $t<0$.  For a matrix $X$, $\sign(X)$ is defined
entrywise.

For a nonempty closed set $\mathcal C$, let
\[
 \dist(X,\mathcal C):=\inf_{Y\in\mathcal C}\|X-Y\|_F,
 \qquad
 \Pi_{\mathcal C}(X):=
 \arg\min_{Y\in\mathcal C}\|X-Y\|_F.
\]
When $\mathcal C$ is closed and convex, the projection is single-valued and
nonexpansive. 

Let $\Omega:=[-1,1]^{n\times r}$ and
$\mathcal B:=\{-1,1\}^{n\times r}$.  The binary orthogonal feasible set
$\mathcal F_{\rm BO}$ is defined in \eqref{eq:BO}.

Define the smooth constraint mapping
\[
 C:\mathbb R^{n\times r}\to\mathbb S^r,
 \qquad C(X):=X^\top X-nI_r.
\]
Its derivative and adjoint with respect to the Frobenius inner products are
\[
 DC(X)[D]=X^\top D+D^\top X,\qquad
 DC(X)^*[\Lambda]=2X\Lambda,
 \quad\Lambda\in\mathbb S^r.
\]
We measure constraint violation by the residual
\[
 F:\mathbb R^{n\times r}\to\mathbb R_+,\qquad F(X):=\|C(X)\|_F.
\]
Unlike $C$, the residual $F$ need not be differentiable at points where
$C(X)=0$.
The following observation, also used in \cite{xiao2026relaxation}, establishes
the exactness of the box reformulation.

\begin{lemma} 
\label{lem:box-binary-equivalence}
\[
\{X\in\Omega:C(X)=0\}=\mathcal F_{\rm BO}. 
\]
\end{lemma}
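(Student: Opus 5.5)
The plan is to prove the two inclusions separately. The inclusion $\mathcal F_{\rm BO}\subseteq\{X\in\Omega:C(X)=0\}$ is immediate. Every $X\in\mathcal F_{\rm BO}$ has entries in $\{-1,1\}\subset[-1,1]$, so $X\in\Omega$. The defining condition $X^\top X=nI_r$ is exactly $C(X)=0$.

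For the reverse inclusion, take $X\in\Omega$ with $C(X)=0$. I will use only the diagonal of $C(X)$, which reads
\[
 \|X_{\cdot j}\|_2^2=\sum_{i=1}^n X_{ij}^2=n,\qquad j=1,\dots,r.
\]
The box constraint gives $X_{ij}^2\le 1$ for every $i,j$. The column sum $\sum_i X_{ij}^2$ is therefore a sum of $n$ terms, each at most $1$, and it equals $n$. This forces $X_{ij}^2=1$ for every $i$, equivalently
\[
 \sum_{i=1}^n \bigl(1-X_{ij}^2\bigr)=0 \quad\text{with each summand nonnegative.}
\]
Hence $X_{ij}\in\{-1,1\}$ for all $i,j$, so $X\in\mathcal B$. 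Together with $X^\top X=nI_r$, this gives $X\in\mathcal F_{\rm BO}$.

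There is no real obstacle here. The only point to notice is that the argument needs just the diagonal entries of the orthogonality condition. The box constraint $|X_{ij}|\le1$ and the column norm identity $\|X_{\cdot j}\|_2^2=n$ together squeeze every entry to modulus one. The off-diagonal conditions are not needed for binarity; they are simply inherited in the conclusion $X\in\mathcal F_{\rm BO}$.
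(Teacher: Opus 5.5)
Your proof is correct and follows essentially the same route as the paper: it uses only the diagonal equations $\sum_i X_{ij}^2=n$ together with $X_{ij}^2\le1$ on $\Omega$ to force binarity. The reverse inclusion is immediate, exactly as in the paper.
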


\begin{proof}
If $X\in\Omega$ and $C(X)=0$, then the $j$th diagonal equation gives
\[
 \|X_{\cdot j}\|_2^2=\sum_{i=1}^n X_{ij}^2=n,
 \qquad j=1,\ldots,r.
\]
Since $X\in\Omega$, each term satisfies $X_{ij}^2\le1$.  Equality of the
sum to $n$ therefore implies $X_{ij}^2=1$ for every $i,j$, and hence
$X\in\mathcal B$.  The reverse inclusion is immediate.
\end{proof}

Thus, \eqref{eq:BO} has the equivalent box-constrained formulation
\begin{equation}\label{eq:box-orth}
 \min_{X\in\Omega}f(X)
 \qquad\mathrm{s.t.}\qquad C(X)=0.
\end{equation}

Projection onto $\Omega$ is componentwise clipping.  For $X\in\Omega$, its normal cone is
\[
 N_\Omega(X):=
 \{G:\langle G,Y-X\rangle\le0\ \ \forall Y\in\Omega\},
\]
i.e., $G_{ij}\ge0$ at $X_{ij}=1$, $G_{ij}\le0$ at $X_{ij}=-1$,
and $G_{ij}=0$ in the interior.  At a vertex $X\in\mathcal B$, the tangent
cone is
\[
 T_\Omega(X)=\{H:X\circ H\le0\}.
\]

Recent modified alternating-projection methods achieve local quadratic
convergence for general, possibly nonconvex feasibility problems under a
constraint nondegeneracy condition \cite{xiao2025quadraticprojection}.
That condition fails for \eqref{eq:box-orth}: at every
$X\in\mathcal F_{\rm BO}$, the lineality space
$T_\Omega(X)\cap[-T_\Omega(X)]$ is $\{0\}$, so its image under $DC(X)$
cannot equal $\mathbb S^r$.  Thus the cited quadratic-convergence theory
does not apply to this formulation.

Since $\mathcal F_{\rm BO}$ is finite, every feasible point is isolated
and hence locally optimal in the usual constrained sense.  The next
observation shows that every feasible point also satisfies the KKT
conditions of the box reformulation.

\begin{proposition}
\label{prop:kkt_degeneracy}
For every $X\in\mathcal F_{\rm BO}$, there exists
$\Lambda\in\mathbb S^r$ such that
\[
 X_{ij}\bigl[\nabla f(X)+2X\Lambda\bigr]_{ij}<0
 \qquad \forall i,j.
\]
Consequently, every feasible point of \eqref{eq:box-orth} satisfies its
KKT conditions.
\end{proposition}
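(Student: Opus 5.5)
We will choose $\Lambda$ as a large negative multiple of the identity and let it dominate the gradient. Fix $X\in\mathcal F_{\rm BO}$ and set $\Lambda:=-\mu I_r$ with $\mu>0$ to be determined, so that $\Lambda\in\mathbb S^r$. Because $X_{ij}^2=1$ for every entry,
\[
 X_{ij}\bigl[\nabla f(X)+2X\Lambda\bigr]_{ij}
 = X_{ij}[\nabla f(X)]_{ij}-2\mu X_{ij}^2
 = X_{ij}[\nabla f(X)]_{ij}-2\mu .
\]
Since $|X_{ij}[\nabla f(X)]_{ij}|\le\|\nabla f(X)\|_{\max}$, any choice $\mu>\tfrac12\|\nabla f(X)\|_{\max}$ makes every entry strictly negative. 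This proves the displayed strict inequality. Note that no property of $f$ beyond the existence of $\nabla f(X)$ is used; in particular, orthogonality of $X$ plays no role in this step.

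For the KKT consequence, write the Lagrangian stationarity condition of \eqref{eq:box-orth} as
\[
 0\in\nabla f(X)+DC(X)^*[\Lambda]+N_\Omega(X),
\]
using the adjoint $DC(X)^*[\Lambda]=2X\Lambda$ from Section~\ref{sec:preliminaries}. Set $G:=-(\nabla f(X)+2X\Lambda)$. The strict inequality gives $X_{ij}G_{ij}>0$. Hence $G_{ij}>0$ where $X_{ij}=1$ and $G_{ij}<0$ where $X_{ij}=-1$.

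Because $X\in\mathcal B$ is a vertex of the box, every entry is active. The entrywise description of $N_\Omega(X)$ given earlier therefore yields $G\in N_\Omega(X)$ (indeed, $G$ lies in the interior of this normal cone). Primal feasibility $X\in\Omega$ and $C(X)=0$ holds by Lemma~\ref{lem:box-binary-equivalence}. Together with the stationarity condition, $X$ is a KKT point.

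There is no real obstacle. The only point requiring care is matching the sign convention of the normal cone with the form of the KKT system, i.e., whether the multiplier term is written as $+DC(X)^*[\Lambda]$ or $-DC(X)^*[\Lambda]$. Since $\Lambda$ ranges over all of $\mathbb S^r$, either convention is handled by replacing $\Lambda$ with $-\Lambda$.
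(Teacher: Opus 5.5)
Your proposal is correct and follows essentially the same route as the paper: you choose $\Lambda=-\tau I_r$ with $\tau>\tfrac12\max_{i,j}|\nabla f(X)_{ij}|$, use $X_{ij}^2=1$ to get $X_{ij}\nabla f(X)_{ij}-2\tau<0$, and read off $-(\nabla f(X)+2X\Lambda)\in N_\Omega(X)$ from the entrywise sign description of the normal cone at a vertex. Your explicit sign check for the normal-cone step spells out what the paper compresses into ``these inequalities imply.''
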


\begin{proof}
Fix $X\in\mathcal F_{\rm BO}$ and choose $\Lambda=-\tau I_r$, where
$\tau>\frac12\max_{i,j}|\nabla f(X)_{ij}|$.
Since $X_{ij}^2=1$,
\[
 X_{ij}\bigl[\nabla f(X)+2X\Lambda\bigr]_{ij}
 =X_{ij}\nabla f(X)_{ij}-2\tau<0
 \qquad \forall i,j.
\]
These inequalities imply
$0\in\nabla f(X)+2X\Lambda+N_\Omega(X)$, which is the KKT stationarity
condition for \eqref{eq:box-orth}.
\end{proof}

Thus, KKT stationarity for \eqref{eq:box-orth} cannot distinguish globally
optimal solutions from other feasible points.  The exact-penalty analysis
below relates global and local minimizers of the discrete problem to
those of a penalty model over the continuous box.

We next record the divisibility conditions on $n$ that are required for
$\mathcal F_{\rm BO}$ to be nonempty.  To express the second condition, recall
that, for a two-level array $D\in\{-1,1\}^{n\times s}$ with $s\ge2$ and a nonempty
$S\subseteq\{1,\ldots,s\}$, its $J$-characteristic is
\cite{tang2001jcharacteristics}
\[
 J_S(D):=\sum_{i=1}^n\prod_{j\in S}D_{ij}.
\]
The array is an $\operatorname{OA}(n,s,2,2)$ if, in every pair of columns,
each of the four sign patterns $(1,1)$, $(1,-1)$, $(-1,1)$, and $(-1,-1)$
occurs exactly $n/4$ times \cite{hedayat1999orthogonal}.  Equivalently, its
$J$-characteristics of orders one and two vanish
\cite{tang2001jcharacteristics,beder2014wordlength}:
\begin{equation}\label{eq:oa_j_characterization}
 J_S(D)=0\qquad \forall S\text{ with }1\le |S|\le2.
\end{equation}

\begin{proposition}
\label{prop:bo_feasibility_conditions}
If $\mathcal F_{\rm BO}\ne\emptyset$, then
\[
 r\ge2\ \to\ 2\mid n,
 \qquad
 r\ge3\ \to\ 4\mid n.
\]
\end{proposition}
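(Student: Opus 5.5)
The plan is a direct parity and counting argument on the columns of a feasible $X$; the $J$-characteristic notation can be used but is not needed. Fix $X\in\mathcal F_{\rm BO}$. The off-diagonal equations of $C(X)=0$ say $\langle X_{\cdot j},X_{\cdot k}\rangle=0$ for $j\ne k$, and every entry is $\pm1$.

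\emph{Case $r\ge2$.} Take columns $u=X_{\cdot1}$ and $v=X_{\cdot2}$. Each product $u_iv_i$ lies in $\{-1,1\}$. Let $a$ be the number of indices with $u_iv_i=1$. Then
\[
 0=\langle u,v\rangle=a-(n-a)=2a-n,
\]
so $n=2a$ is even.

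\emph{Case $r\ge3$.} First normalize the rows. Multiplying row $i$ of $X$ by $X_{i1}$ preserves membership in $\mathcal F_{\rm BO}$, because it is left multiplication by a diagonal $\pm1$ matrix $D$ and $(DX)^\top(DX)=X^\top X$. So we may assume the first column is the all-ones vector $\mathbf 1$.

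Orthogonality to $\mathbf 1$ then forces each of the columns $u=X_{\cdot2}$ and $v=X_{\cdot3}$ to contain exactly $n/2$ entries equal to $+1$. Let $n_{++},n_{+-},n_{-+},n_{--}$ count the rows in which $(u_i,v_i)$ takes each sign pattern. These counts satisfy
\[
 n_{++}+n_{+-}+n_{-+}+n_{--}=n,\qquad n_{++}+n_{+-}=n_{-+}+n_{--},
\]
which is $\langle\mathbf 1,u\rangle=0$. They also satisfy $n_{++}+n_{-+}=n_{+-}+n_{--}$, which is $\langle\mathbf 1,v\rangle=0$, and $n_{++}+n_{--}=n_{+-}+n_{-+}$, which is $\langle u,v\rangle=0$.

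Solving this linear system gives $n_{++}=n_{+-}=n_{-+}=n_{--}=n/4$. Since these are integers, $4\mid n$. This is exactly the statement that the normalized first three columns form an $\operatorname{OA}(n,2,2,2)$, matching \eqref{eq:oa_j_characterization}.

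A slicker route for the second case avoids normalization. The vector $(\mathbf 1+u)\circ(\mathbf 1+v)$ has entries in $\{0,4\}$, and its inner product with $\mathbf 1$ is $n$ after using the three orthogonality relations. This also gives $4\mid n$.

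There is no real obstacle. The only point needing care is justifying the row-sign normalization, that is, checking that the feasible set is invariant under row sign flips, which was done above.
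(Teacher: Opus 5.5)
Your proof is correct and is essentially the paper's. Normalizing each row by $X_{i1}$ turns columns $2$ and $3$ into exactly the paper's array $D=[X_{\cdot1}\circ X_{\cdot2},\,X_{\cdot1}\circ X_{\cdot3}]$, and solving your $4\times4$ count system is a self-contained proof of the $J$-characteristic/OA equivalence \eqref{eq:oa_j_characterization} that the paper only cites.

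One small correction: your ``slicker route'' does not actually avoid normalization, since it uses $\langle\mathbf 1,u\rangle=\langle\mathbf 1,v\rangle=0$. A genuinely normalization-free version uses $(X_{\cdot1}+X_{\cdot2})\circ(X_{\cdot1}+X_{\cdot3})$, whose entries lie in $\{-4,0,4\}$ and sum to $n$.
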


\begin{proof}
Fix $X\in\mathcal F_{\rm BO}$.  If $r\ge2$, choose two distinct columns $X_{\cdot j}$ and
$X_{\cdot\ell}$.  Their orthogonality gives
\(
 0=X_{\cdot j}^\top X_{\cdot\ell}
 =\sum_{i=1}^n X_{ij}X_{i\ell}.
\)
Since $X_{ij}X_{i\ell}\in\{-1,1\}$ for $i=1,\ldots,n$, the sum can be zero
only if it contains equally many $1$'s and $-1$'s.  Hence $n$ must be even.

If $r\ge3$, choose any three distinct columns of \(X\in\mathcal F_{\rm BO}\), 
relabel them as 
\(X_{\cdot1}\), \(X_{\cdot2}\), and \(X_{\cdot3}\), and set
$D=[X_{\cdot 1}\circ X_{\cdot 2},\ X_{\cdot 1}\circ X_{\cdot 3}]$.
Since the columns of $X$ are pairwise orthogonal,
$\sum_{i=1}^nD_{i1}=X_{\cdot 1}^\top X_{\cdot 2}=0$ and
$\sum_{i=1}^nD_{i2}=X_{\cdot 1}^\top X_{\cdot 3}=0$.
Moreover, since $X_{i1}^2=1$ for $i=1,\ldots,n$, we have
\[
 D_{\cdot 1}^\top D_{\cdot 2}
 =\sum_{i=1}^nX_{i1}^2X_{i2}X_{i3}
 =X_{\cdot 2}^\top X_{\cdot 3}
 =0.
\]
Thus, all first- and second-order $J$-characteristics of $D$ vanish.
By \eqref{eq:oa_j_characterization},
$D$ is an $\operatorname{OA}(n,2,2,2)$.  Each of its four level
combinations therefore occurs $n/4$ times, so $4\mid n$.
\end{proof}

\begin{remark}
When $r=n$, the set $\mathcal F_{\rm BO}$ consists precisely of the
Hadamard matrices of order $n$.  The cases $n=1$ and $n=2$ are trivial,
whereas Proposition~\ref{prop:bo_feasibility_conditions} shows that
$4\mid n$ is necessary for $n\ge3$.  This divisibility condition cannot
be strengthened merely by assuming $r\ge4$: a Hadamard matrix of order
four gives $\mathcal F_{\rm BO}\ne\emptyset$ for $(n,r)=(4,4)$, although
$8\nmid n$ \cite{hedayat1978hadamard}.  The Hadamard conjecture, which 
remains open \cite{ferber2022hadamard}, asserts
that a Hadamard matrix of order $4m$ exists for every positive integer
$m$.
\end{remark}

Throughout the remainder of the paper, we impose the following structural condition.

\begin{assumption}
\label{ass:bo_structure}
Either $r=1$ or $n$ is even.
\end{assumption}

\section{Error Bounds, Rounding, and Exact Penalization}
\label{sec:error_bound}

We establish residual-based rounding guarantees and local and global
error bounds, and derive exact-penalty results.

Existing error bounds address nonnegative and sign-constrained
orthogonality systems \cite{jiang2023exact,qian2024error,chen2024tight}.
For binary orthogonal optimization, the formulation in
\cite{xiao2026relaxation} keeps orthogonality exact and penalizes box
violation.  Our analysis keeps $X\in\Omega$ and uses
$F(X)=\|X^\top X-nI_r\|_F$ as the residual.  Moreover, our result has the optimal
Lipschitz exponent $1$.  For the general sign-constrained Stiefel manifold,
the corresponding exponent is $1/2$, except in special cases where it improves
to $1$ \cite{chen2024tight}.

Throughout the remainder of the paper, $L_f$ denotes a Lipschitz constant
of $f$ on $\Omega$ with respect to the Frobenius norm.  

\subsection{Residual gaps and rounding guarantees}

For $X\in\Omega$, define
\[
 a_j(X):=n-\|X_{\cdot j}\|_2^2=-C(X)_{jj}\ge0.
\]
The following elementary estimate bounds the distance to the sign-rounded
matrix.

\begin{lemma}
\label{lem:deficit_rounding}
Let $X\in\Omega$ have no zero entries and set
$\widehat X=\sign(X)$.  Then
\[
 \|X_{\cdot j}-\widehat X_{\cdot j}\|_2
 \le\|X_{\cdot j}-\widehat X_{\cdot j}\|_1
 \le a_j(X),\qquad j=1,\ldots,r,
\]
and consequently
\begin{equation}\label{eq:deficit_rounding}
   \|X-\widehat X\|_F\le F(X).
\end{equation} 
\end{lemma}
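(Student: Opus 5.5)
The plan is to reduce everything to a scalar inequality for each entry and then sum. Fix a column $j$ and an index $i$, and write $t:=|X_{ij}|\in(0,1]$. Since $X$ has no zero entries, $\widehat X_{ij}=\sign(X_{ij})$ has the same sign as $X_{ij}$. Hence
\[
 |X_{ij}-\widehat X_{ij}|=1-t .
\]

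The key elementary fact is that for $t\in[0,1]$,
\[
 1-t\le 1-t^2=(1-t)(1+t).
\]
This holds because $1+t\ge1$ and $1-t\ge0$. Summing over $i$ gives
\[
 \|X_{\cdot j}-\widehat X_{\cdot j}\|_1=\sum_{i=1}^n(1-|X_{ij}|)\le\sum_{i=1}^n(1-X_{ij}^2)=n-\|X_{\cdot j}\|_2^2=a_j(X).
\]
The first inequality $\|\cdot\|_2\le\|\cdot\|_1$ is the standard comparison of vector norms. This proves the columnwise claims.

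For \eqref{eq:deficit_rounding}, square and sum over the columns:
\[
 \|X-\widehat X\|_F^2=\sum_{j=1}^r\|X_{\cdot j}-\widehat X_{\cdot j}\|_2^2\le\sum_{j=1}^r a_j(X)^2 .
\]
Since $a_j(X)=-C(X)_{jj}$, the right-hand side is $\sum_j C(X)_{jj}^2$. This is at most $\sum_{j,k}C(X)_{jk}^2=F(X)^2$, because the off-diagonal terms are nonnegative. Taking square roots gives $\|X-\widehat X\|_F\le F(X)$.

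No step is a genuine obstacle. The only points needing care are these:
\begin{itemize}
\item the nonzero-entry hypothesis, which guarantees $|\widehat X_{ij}|=1$ so that the entrywise distance equals $1-|X_{ij}|$;
\item the fact that only the diagonal of $C(X)$ is needed, so discarding the off-diagonal entries loses nothing in the direction of the inequality.
\end{itemize}
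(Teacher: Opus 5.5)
Your proof is correct and follows essentially the same route as the paper. Both use the scalar inequality $|t-\sign(t)|=1-|t|\le 1-t^2$ for $t\in[-1,1]\setminus\{0\}$, sum it columnwise to get $\|X_{\cdot j}-\widehat X_{\cdot j}\|_1\le a_j(X)$, and then bound $\sum_j a_j(X)^2$ by $\|C(X)\|_F^2$ by discarding the off-diagonal entries.
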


\begin{proof}
For $t\in[-1,1]\setminus\{0\}$,
$|t-\sign(t)|=1-|t|\le1-t^2$.  Applying this inequality columnwise gives
\[
 \|X_{\cdot j}-\widehat X_{\cdot j}\|_2
 \le\|X_{\cdot j}-\widehat X_{\cdot j}\|_1
 \le a_j(X).
\]
Squaring $a_j(X)$ and summing over $j$, and using
$\sum_j a_j(X)^2\le\|C(X)\|_F^2$ proves \eqref{eq:deficit_rounding}.
\end{proof}

We next give conditions ensuring that
$\sign(X)\in\mathcal F_{\rm BO}$.  Sign rounding can
fail in only two ways: some entry of $X$ is zero, in which case $\sign(X)$ is
not binary, or $\sign(X)$ is binary but its columns are not orthogonal.  The
following result gives a positive residual lower bound for each failure.

\begin{proposition}
\label{prop:sign_residual_gaps}
Let $X\in\Omega$.
\begin{enumerate}
 \item[i.] If $X$ has a zero entry, then $F(X)\ge1$.
 \item[ii.] If $X\in\mathcal B\setminus\mathcal F_{\rm BO}$, then
 \[
  F(X)\ge2\sqrt2.
 \]
 \item[iii.] Suppose that $X$ has no zero entries.  If
 $\widehat X=\sign(X)\notin\mathcal F_{\rm BO}$, then
 \begin{equation}\label{eq:nonorthogonal_sign_gap}
  F(X)\ge\sqrt{\frac85}.
 \end{equation}
\end{enumerate}
\end{proposition}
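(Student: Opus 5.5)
The three parts are handled separately, each reading a lower bound on $F(X)$ off a few entries of $C(X)$, using $F(X)^2=\|C(X)\|_F^2\ge\sum_j C(X)_{jj}^2+\sum_{j\ne\ell}C(X)_{j\ell}^2$ restricted to whichever entries are convenient.

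\emph{Part i.} Suppose $X_{ij}=0$. Every entry of $X$ satisfies $X_{kj}^2\le1$, so $\|X_{\cdot j}\|_2^2\le n-1$. Hence $a_j(X)\ge1$, and therefore $F(X)\ge|C(X)_{jj}|=a_j(X)\ge1$.

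\emph{Part ii.} Let $X\in\mathcal B\setminus\mathcal F_{\rm BO}$. All diagonal entries of $C(X)$ vanish, since each column has squared norm $n$. Because $X$ is not feasible, some off-diagonal entry $X_{\cdot j}^\top X_{\cdot\ell}\neq0$ with $j\ne\ell$. This forces $r\ge2$, so Assumption~\ref{ass:bo_structure} gives that $n$ is even. A sum of $n$ terms from $\{-1,1\}$ has the parity of $n$, so the nonzero inner product satisfies $|X_{\cdot j}^\top X_{\cdot\ell}|\ge2$. Since $C(X)$ is symmetric, this entry appears twice, giving $F(X)^2\ge 2\cdot 4=8$.

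\emph{Part iii.} Set $\widehat X=\sign(X)\in\mathcal B\setminus\mathcal F_{\rm BO}$. By the argument of part ii there are indices $j\ne\ell$ with $|\widehat X_{\cdot j}^\top\widehat X_{\cdot\ell}|\ge2$.

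Next I bound the perturbation of this inner product entrywise. For $x,y\in[-1,1]$ with signs $s,t$,
\[
|xy-st|\le|x-s|\,|y|+|s|\,|y-t|\le|x-s|+|y-t|.
\]
Summing over rows and applying Lemma~\ref{lem:deficit_rounding} gives
\[
\bigl|X_{\cdot j}^\top X_{\cdot\ell}-\widehat X_{\cdot j}^\top\widehat X_{\cdot\ell}\bigr|\le\|X_{\cdot j}-\widehat X_{\cdot j}\|_1+\|X_{\cdot \ell}-\widehat X_{\cdot \ell}\|_1\le a_j+a_\ell .
\]
Hence $|C(X)_{j\ell}|\ge(2-a_j-a_\ell)_+$.

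Keeping the two diagonal entries and the two symmetric off-diagonal entries, and writing $s=a_j+a_\ell$ with $a_j^2+a_\ell^2\ge s^2/2$, I get
\[
F(X)^2\ge a_j^2+a_\ell^2+2(2-a_j-a_\ell)_+^2\ge \tfrac{s^2}{2}+2(2-s)_+^2 .
\]
It remains to minimize the right-hand side over $s\ge0$.
\begin{itemize}
\item If $s\ge2$, the right-hand side is at least $2>8/5$.
\item If $s\in[0,2]$, the quadratic is minimized at $s=8/5$, with value $\tfrac{32}{25}+\tfrac{8}{25}=\tfrac85$.
\end{itemize}
This yields \eqref{eq:nonorthogonal_sign_gap}.

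The main obstacle is the perturbation estimate in part iii, namely choosing the right norm so that the deficits $a_j$ control the change in the inner product. The $\ell_1$ bound in Lemma~\ref{lem:deficit_rounding} is exactly what makes this linear estimate work. After that, the one-variable optimization is routine.
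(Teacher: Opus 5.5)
Your proof is correct and follows the paper's argument. Part (ii) uses the same parity argument, and part (iii) uses the same $\ell_1$ perturbation estimate via Lemma~\ref{lem:deficit_rounding}. The only cosmetic difference is the last step of (iii): you minimize $s^2/2+2(2-s)_+^2$ in the single variable $s$, whereas the paper applies a weighted Cauchy--Schwarz inequality to $2\le a+b+c$ together with $F(X)^2\ge a^2+b^2+2c^2$; both give exactly $8/5$.
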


\begin{proof}
The first item follows immediately from
$a_j(X)=\sum_i(1-X_{ij}^2)\ge1$ for a column containing $0$.

For items (ii) and (iii), an infeasible binary sign pattern requires
$r\ge2$, so Assumption~\ref{ass:bo_structure} implies that $n$ is even.
For item (ii), binary infeasibility means that some second-order
$J$-characteristic
\(
 J_{\{p,q\}}(X)=X_{\cdot p}^\top X_{\cdot q}
\)
is nonzero with $p\neq q$.  Since $n$ is even, it is a nonzero even integer, and hence
$|X_{\cdot p}^\top X_{\cdot q}|\ge2$.  Moreover, because $X$ is binary,
$C(X)=X^\top X-nI_r$ is symmetric with zero diagonal.  Therefore,
\[
 F(X)^2
 =2\sum_{p<q}(X_{\cdot p}^\top X_{\cdot q})^2
 \ge 2\cdot 2^2=8.
\]

For item (iii), since
\(\widehat X=\sign(X)\) is binary but not orthogonal, there exist
distinct columns \(p\) and \(q\) such that
\(
 |J_{\{p,q\}}(\widehat X)|
 =|\widehat X_{\cdot p}^\top\widehat X_{\cdot q}|\ge2.
\)
Set $a=a_p(X)$, $b=a_q(X)$, and
$c=|X_{\cdot p}^\top X_{\cdot q}|$.  Using H\"older's inequality gives
\[
\begin{aligned}
 |\widehat X_{\cdot p}^\top\widehat X_{\cdot q}|
 &\le c
   +|(\widehat X_{\cdot p}-X_{\cdot p})^\top\widehat X_{\cdot q}|
   +|X_{\cdot p}^\top(\widehat X_{\cdot q}-X_{\cdot q})|\\
 &\le c
   +\|\widehat X_{\cdot p}-X_{\cdot p}\|_1
      \|\widehat X_{\cdot q}\|_\infty
   +\|X_{\cdot p}\|_\infty
      \|\widehat X_{\cdot q}-X_{\cdot q}\|_1\\
 &\le c
   +\|\widehat X_{\cdot p}-X_{\cdot p}\|_1
   +\|\widehat X_{\cdot q}-X_{\cdot q}\|_1
 \le a+b+c,
\end{aligned}
\]
where the last inequality follows from
Lemma~\ref{lem:deficit_rounding}.
By symmetry of $C(X)$,
$F(X)^2\ge a^2+b^2+2c^2$.  Using Cauchy--Schwarz yields
\[
 2\le a+b+c
 \le\sqrt{\frac52}\,(a^2+b^2+2c^2)^{1/2}
 \le\sqrt{\frac52}\,F(X).
\]
\end{proof}

Combining Proposition~\ref{prop:sign_residual_gaps} with \eqref{eq:deficit_rounding} 
yields an explicit residual threshold below
which sign rounding produces a matrix in \(\mathcal F_{\rm BO}\).

\begin{theorem}
\label{thm:finite_bo_identification}
If $X\in\Omega$ satisfies
\begin{equation}\label{eq:identification_threshold}
 F(X)<1,
\end{equation}
then $X$ has no zero entries and
\[
 \widehat X:=\sign(X)\in\mathcal F_{\rm BO}.
\]
Moreover,
\begin{equation}\label{eq:identification_distance}
 \dist(X,\mathcal F_{\rm BO})
 \le\|X-\widehat X\|_F
 \le F(X).
\end{equation}
\end{theorem}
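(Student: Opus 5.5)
The argument combines the three items of Proposition~\ref{prop:sign_residual_gaps} with Lemma~\ref{lem:deficit_rounding}, arguing by contraposition.

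First, suppose $X\in\Omega$ satisfies $F(X)<1$. If $X$ had a zero entry, item~(i) of Proposition~\ref{prop:sign_residual_gaps} would give $F(X)\ge1$, a contradiction. Hence $X$ has no zero entries, and $\widehat X=\sign(X)\in\mathcal B$ is a well-defined binary matrix.

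Next, suppose $\widehat X\notin\mathcal F_{\rm BO}$. Since $X$ has no zero entries, item~(iii) applies and yields $F(X)\ge\sqrt{8/5}>1$, again contradicting \eqref{eq:identification_threshold}. The only point to check here is that item~(iii) relies on Assumption~\ref{ass:bo_structure}. That assumption is in force throughout the section, so nothing extra is needed. In the case $r=1$, every binary vector $x$ satisfies $x^\top x=n$, so $\widehat X\in\mathcal F_{\rm BO}$ automatically. Therefore $\widehat X\in\mathcal F_{\rm BO}$.

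Finally, the distance estimate \eqref{eq:identification_distance} follows directly. Since $\widehat X\in\mathcal F_{\rm BO}$, the definition of distance gives $\dist(X,\mathcal F_{\rm BO})\le\|X-\widehat X\|_F$. Because $X$ has no zero entries, Lemma~\ref{lem:deficit_rounding} applies and gives $\|X-\widehat X\|_F\le F(X)$, from \eqref{eq:deficit_rounding}. There is no real obstacle here: all the substantive work is contained in the residual gaps of Proposition~\ref{prop:sign_residual_gaps}, and the threshold $1$ is simply the smaller of the two gaps $1$ and $\sqrt{8/5}$.
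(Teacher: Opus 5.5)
Your proposal is correct and matches the paper's proof: items (i) and (iii) of Proposition~\ref{prop:sign_residual_gaps} exclude zero entries and nonorthogonal sign patterns when $F(X)<1$, and Lemma~\ref{lem:deficit_rounding} then gives the distance bound. Your remarks on Assumption~\ref{ass:bo_structure} and on the case $r=1$ only spell out what the paper's shorter proof leaves implicit.
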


\begin{proof}
Suppose that $F(X)<1$.  Proposition~\ref{prop:sign_residual_gaps} excludes
zero entries and nonorthogonal sign patterns.
Lemma~\ref{lem:deficit_rounding} then gives
\eqref{eq:identification_distance}.
\end{proof}

The threshold \(1\) in
\eqref{eq:identification_threshold} is sharp uniformly over the admissible
problem dimensions, with zero entries providing the limiting obstruction.
Indeed, when \(r=1\), a vector \(X\) with
one zero entry and all other entries in \(\{-1,1\}\) satisfies
\(F(X)=1\), while \(\sign(X)\) contains a zero entry and hence does not
belong to \(\mathcal F_{\rm BO}\).  By contrast, if \(X\) has no zero
entries, \eqref{eq:nonorthogonal_sign_gap} gives the
lower bound \(\sqrt{8/5}\) for the residual of a nonorthogonal sign pattern.

\begin{corollary}
\label{cor:rounding_objective_bound}
For any $X\in\Omega$ with $F(X)<1$, let
$\widehat X=\sign(X)$.  Then
\begin{equation}\label{eq:residual_objective_transfer}
 |f(\widehat X)-f(X)|
 \le L_fF(X).
\end{equation}
\end{corollary}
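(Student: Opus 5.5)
The plan is to combine Theorem~\ref{thm:finite_bo_identification} with the Lipschitz continuity of $f$ on $\Omega$; essentially no new estimate is needed. Fix $X\in\Omega$ with $F(X)<1$ and set $\widehat X=\sign(X)$. Theorem~\ref{thm:finite_bo_identification} says that $X$ has no zero entries. Hence $\widehat X\in\mathcal B\subseteq\Omega$, and in fact $\widehat X\in\mathcal F_{\rm BO}$. Both $X$ and $\widehat X$ therefore lie in $\Omega$.

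Since $L_f$ is a Lipschitz constant of $f$ on $\Omega$ with respect to the Frobenius norm, we get
\[
 |f(\widehat X)-f(X)|\le L_f\|\widehat X-X\|_F .
\]
Next we apply the distance estimate \eqref{eq:identification_distance}, equivalently \eqref{eq:deficit_rounding} from Lemma~\ref{lem:deficit_rounding}. The latter applies because $X$ has no zero entries, and it gives $\|X-\widehat X\|_F\le F(X)$. Substituting this bound yields \eqref{eq:residual_objective_transfer}.

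The only point needing care is that both points at which $f$ is evaluated belong to $\Omega$, where the Lipschitz constant is defined. This is exactly what the first conclusion of Theorem~\ref{thm:finite_bo_identification} supplies: without it, $\sign(X)$ could have zero entries, though it would still lie in $\Omega$. Finiteness of $L_f$ is automatic here, since $f$ is $C^1$ and $\Omega$ is compact and convex. So there is no real obstacle beyond invoking the theorem correctly.
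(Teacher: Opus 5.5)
Your proposal is correct and is the paper's own argument: Lipschitz continuity of $f$ on $\Omega$ combined with the rounding-distance bound $\|X-\widehat X\|_F\le F(X)$ from Theorem~\ref{thm:finite_bo_identification}. One small clarification: $\sign(X)$ has entries in $\{-1,0,1\}$, so it lies in $\Omega$ for every $X\in\Omega$; the no-zero-entries conclusion is therefore not what puts $\widehat X$ in $\Omega$, and it serves only to invoke the distance estimate as stated.
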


\begin{proof}
By Theorem~\ref{thm:finite_bo_identification},
$\|\widehat X-X\|_F\le F(X)$.  Hence
\[
 |f(\widehat X)-f(X)|
 \le L_f\|\widehat X-X\|_F
 \le L_fF(X).
\]
\end{proof}

For completeness, the following auxiliary certificate replaces the
Frobenius residual by its entrywise maximum norm.  The threshold $2/3$ for
$r\ge2$ is sufficient; no sharpness claim is made for that case.

\begin{corollary}
\label{cor:max_residual_certificate}
Let $X\in\Omega$.  If
\begin{equation}\label{eq:max_identification_threshold}
 \|C(X)\|_{\max}<
 \begin{cases}
  1,&r=1,\\
  2/3,&r\ge2,
 \end{cases}
\end{equation}
then $\sign(X)\in\mathcal F_{\rm BO}$, and
\[
 \dist(X,\mathcal F_{\rm BO})
 \le\|X-\sign(X)\|_F
 \le\sqrt r\,\|C(X)\|_{\max}.
\]
\end{corollary}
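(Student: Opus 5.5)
We follow the proof of Theorem~\ref{thm:finite_bo_identification}, but replace each Frobenius-residual estimate by one that involves only entries of $C(X)$. Set $m:=\|C(X)\|_{\max}$. The diagonal entries satisfy $a_j(X)=-C(X)_{jj}\le m$, and the off-diagonal entries satisfy $|X_{\cdot p}^\top X_{\cdot q}|=|C(X)_{pq}|\le m$ for $p\ne q$.

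\emph{No zero entries.} If some entry of $X$ vanished, the argument of Proposition~\ref{prop:sign_residual_gaps}(i) would give $a_j(X)\ge1$ for that column. Hence $m\ge1$, which contradicts \eqref{eq:max_identification_threshold} in both cases, since $2/3<1$. So $\widehat X:=\sign(X)\in\mathcal B$.

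\emph{Orthogonality.} For $r=1$, $\widehat X^\top\widehat X=n$ holds automatically, so $\widehat X\in\mathcal F_{\rm BO}$. For $r\ge2$, suppose $\widehat X\notin\mathcal F_{\rm BO}$. Assumption~\ref{ass:bo_structure} makes $n$ even, so there are columns $p\ne q$ with $|\widehat X_{\cdot p}^\top\widehat X_{\cdot q}|\ge2$. The Hölder chain in the proof of Proposition~\ref{prop:sign_residual_gaps}(iii), combined with the $\ell_1$ bound of Lemma~\ref{lem:deficit_rounding}, gives
\[
2\le a_p(X)+a_q(X)+|X_{\cdot p}^\top X_{\cdot q}|\le 3m.
\]
This contradicts $m<2/3$. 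Note that we use the plain sum here rather than the Cauchy--Schwarz step, because each of the three terms is individually bounded by $m$.

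\emph{Distance bound.} Lemma~\ref{lem:deficit_rounding} gives $\|X_{\cdot j}-\widehat X_{\cdot j}\|_2\le a_j(X)\le m$ for each column. Summing the squares over the $r$ columns yields $\|X-\widehat X\|_F\le\sqrt r\,m$. The first inequality $\dist(X,\mathcal F_{\rm BO})\le\|X-\widehat X\|_F$ holds because $\widehat X\in\mathcal F_{\rm BO}$.

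No step is a genuine obstacle. The only point needing care is the $r\ge2$ orthogonality case: one has to check that the three-term bound from Proposition~\ref{prop:sign_residual_gaps}(iii) is stated in terms of $a_p$, $a_q$ and $c$ individually, so that it can be combined with entrywise bounds.
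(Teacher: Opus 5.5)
Your proposal is correct and follows the paper's proof essentially step by step. You rule out zero entries via the diagonal entries, rule out nonorthogonal sign patterns for $r\ge2$ via the three-term bound $2\le a_p(X)+a_q(X)+|C(X)_{pq}|\le 3\|C(X)\|_{\max}$ from the proof of Proposition~\ref{prop:sign_residual_gaps}(iii), and obtain the $\sqrt r$ distance bound by summing Lemma~\ref{lem:deficit_rounding} over the columns. You also state explicitly the use of Assumption~\ref{ass:bo_structure} (evenness of $n$), which the paper invokes only implicitly through that proposition.
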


\begin{proof}
Set $\widehat X=\sign(X)$.  If \(X\) had a zero entry, then
\(
 a_j(X)=|C(X)_{jj}|\ge1
\)
for some $j$, contradicting \eqref{eq:max_identification_threshold} because
\(\|C(X)\|_{\max}<1\).  Hence \(\widehat X\in\mathcal B\).  If \(r=1\), then
\(\widehat X\in\mathcal F_{\rm BO}\).  If \(r\ge2\), suppose that
\(\widehat X\notin\mathcal F_{\rm BO}\).  Following the proof of
Proposition~\ref{prop:sign_residual_gaps}, there exist distinct indices
\(p\) and \(q\) such that
\[
 2\le a_p(X)+a_q(X)+|C(X)_{pq}|
 \le3\|C(X)\|_{\max},
\]
which contradicts \(\|C(X)\|_{\max}<2/3\).
Finally, since \(\widehat X\in\mathcal F_{\rm BO}\),
Lemma~\ref{lem:deficit_rounding} gives
\[
 \|X-\widehat X\|_F^2
 \le\sum_{j=1}^r a_j(X)^2
 \le r\|C(X)\|_{\max}^2.
\]
\end{proof}

The threshold $1$ for $r=1$ is sharp: a vector with one zero entry and all other
entries in \(\{-1,1\}\) has \(\|C(X)\|_{\max}=1\), while its sign vector is not binary.

\subsection{Local and global error bounds}

We first sharpen the estimate in
Theorem~\ref{thm:finite_bo_identification} near a feasible point.
A complementary lower bound for the linearized residual along tangent
directions is given in Proposition~\ref{prop:one_sided_coercivity} in the
appendix.

\begin{proposition}
\label{prop:refined_local_eb}
Let $\bar X\in\mathcal F_{\rm BO}$ and $\eta\in[0,1)$.  If $X\in\Omega$
satisfies $\|X-\bar X\|_{\max}\le\eta$, then
\begin{equation}\label{eq:refined_local_eb}
 \dist(X,\mathcal F_{\rm BO})
 \le\|X-\bar X\|_F
 \le\frac{1}{2-\eta}\,F(X).
\end{equation}
Consequently,
\begin{equation}\label{eq:asymptotic_local_modulus}
 \limsup_{\substack{\Omega\ni X\to\bar X\\
                    X\notin\mathcal F_{\rm BO}}}
 \frac{\dist(X,\mathcal F_{\rm BO})}{F(X)}
 \le\frac12.
\end{equation}
\end{proposition}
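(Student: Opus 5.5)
The plan is to bound $F(X)$ from below by the diagonal of $C(X)$ alone and to compare these diagonal entries with the entrywise deviations from $\bar X$. The first inequality in \eqref{eq:refined_local_eb} is immediate, since $\bar X\in\mathcal F_{\rm BO}$. For the second, write $X=\bar X+H$ with $H:=X-\bar X$. Because $X\in\Omega$ and $\bar X_{ij}\in\{-1,1\}$, each entry satisfies $\bar X_{ij}H_{ij}\le0$; this is just $H\in T_\Omega(\bar X)$. Set $t_{ij}:=|H_{ij}|=1-|X_{ij}|\in[0,\eta]$. Then $|X_{ij}|=1-t_{ij}$, so
\[
 1-X_{ij}^2=t_{ij}(2-t_{ij})\ge(2-\eta)\,t_{ij}.
\]
Summing over $i$ gives
\[
 a_j(X)\ge(2-\eta)\|H_{\cdot j}\|_1\ge(2-\eta)\|H_{\cdot j}\|_2 .
\]

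Next square these inequalities and sum over $j$, using $\sum_j a_j(X)^2=\sum_j C(X)_{jj}^2\le F(X)^2$, as in Lemma~\ref{lem:deficit_rounding}. This yields
\[
 (2-\eta)^2\|H\|_F^2\le F(X)^2,
\]
which is \eqref{eq:refined_local_eb}. The only point to check is that $\eta<1$ is used, so that no entry of $X$ can change sign relative to $\bar X$ and the identity $|X_{ij}|=1-t_{ij}$ holds. Indeed, $|X_{ij}-\bar X_{ij}|\le\eta<1$ forces $\sign(X_{ij})=\bar X_{ij}$.

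For \eqref{eq:asymptotic_local_modulus}, take any sequence $\Omega\ni X\to\bar X$ with $X\notin\mathcal F_{\rm BO}$. By Lemma~\ref{lem:box-binary-equivalence}, $F(X)>0$, so the ratio is well defined. Eventually $\|X-\bar X\|_{\max}\le\eta$ for an $\eta$ that tends to $0$, and the ratio is at most $1/(2-\eta)$. Taking the limsup and letting $\eta\downarrow0$ gives $1/2$. There is no real obstacle here; the main care point is the elementary estimate $t(2-t)\ge(2-\eta)t$ on $[0,\eta]$ together with the sign-preservation observation.
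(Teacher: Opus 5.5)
Your proposal is correct and follows the paper's proof essentially step for step. Both use the entrywise identity $1-X_{ij}^2=t_{ij}(2-t_{ij})\ge(2-\eta)t_{ij}$, the column bound $\|H_{\cdot j}\|_2\le\|H_{\cdot j}\|_1\le a_j(X)/(2-\eta)$, and summation of squares against $\sum_j a_j(X)^2\le F(X)^2$. The only cosmetic difference is that the paper sets $d_{ij}=1-\bar X_{ij}X_{ij}$, which equals $|X_{ij}-\bar X_{ij}|$ and satisfies $1-X_{ij}^2=d_{ij}(2-d_{ij})$ for every $X\in\Omega$, so your sign-preservation check is not actually needed.
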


\begin{proof}
Suppose that $\|X-\bar X\|_{\max}\le\eta$.  Set
$d_{ij}=1-\bar X_{ij}X_{ij}$.  Then $0\le d_{ij}\le\eta$ and
\[
 1-X_{ij}^2=d_{ij}(2-d_{ij})\ge(2-\eta)d_{ij}.
\]
Hence
\[
 \|X_{\cdot j}-\bar X_{\cdot j}\|_2
 =\left(\sum_i d_{ij}^2\right)^{1/2}
 \le\sum_i d_{ij}
 \le\frac{a_j(X)}{2-\eta}.
\]
Squaring and summing over the columns proves
\eqref{eq:refined_local_eb}.  Next set
$\eta=\|X-\bar X\|_{\max}$ and let $X\to\bar X$ to obtain
\eqref{eq:asymptotic_local_modulus}.
\end{proof}

\begin{remark}
The asymptotic constant $1/2$ is sharp uniformly over the admissible problem
dimensions.  To see this, take $r=1$ and change one component of a binary
vector from $\bar X_i$ to $(1-t)\bar X_i$.  The distance to the binary set is
$t$, whereas $F(X)=2t-t^2$, so the ratio tends to $1/2$ as $t\downarrow0$.
\end{remark}

Proposition~\ref{prop:refined_local_eb} gives the local modulus for the
Lipschitz exponent $q=1$.  The next theorem gives a global error bound and
characterizes all valid H\"older exponents.

\begin{theorem}
\label{thm:global_error_bound}
Assume that $\mathcal F_{\rm BO}\ne\emptyset$.  Then
\begin{equation}\label{eq:explicit_global_eb}
 \dist(X,\mathcal F_{\rm BO})
 \le 2\sqrt{nr}\,F(X)
 \qquad\forall X\in\Omega.
\end{equation}
Moreover, for every \(q\in(0,1]\),
\[
 \dist(X,\mathcal F_{\rm BO})
 \le 2\sqrt{nr}\,(nr)^{1-q}F(X)^q
 \qquad \forall X\in\Omega.
\]
No error bound with exponent $q>1$ can hold, even locally around a feasible
point.
\end{theorem}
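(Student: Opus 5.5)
The global bound \eqref{eq:explicit_global_eb} will come from a case split at the identification threshold of Theorem~\ref{thm:finite_bo_identification}.

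\emph{Case $F(X)<1$.} Theorem~\ref{thm:finite_bo_identification} gives directly $\dist(X,\mathcal F_{\rm BO})\le F(X)\le 2\sqrt{nr}\,F(X)$.

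\emph{Case $F(X)\ge1$.} Here I use only the diameter of the box. Fix any $Y\in\mathcal F_{\rm BO}$, which exists by assumption. Every entry of $X-Y$ lies in $[-2,2]$, so
\[
 \dist(X,\mathcal F_{\rm BO})\le\|X-Y\|_F\le 2\sqrt{nr}\le 2\sqrt{nr}\,F(X).
\]
Combining the two cases gives \eqref{eq:explicit_global_eb}. The constant $2\sqrt{nr}$ is deliberately crude; it is only needed to absorb the far regime.

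For the H\"older exponents $q\in(0,1]$, I first bound the residual uniformly on $\Omega$. For $X\in\Omega$ we have $|(X^\top X)_{jk}|\le n$, and the diagonal entries of $C(X)$ lie in $[-n,0]$. Hence every entry of $C(X)$ has modulus at most $n$, and $F(X)\le nr$. Then I split again at $F(X)=1$:
\begin{itemize}
\item If $F(X)\le1$, then $F(X)\le F(X)^q$, and since $(nr)^{1-q}\ge1$, \eqref{eq:explicit_global_eb} gives the claim.
\item If $1<F(X)\le nr$, then $F(X)=F(X)^qF(X)^{1-q}\le (nr)^{1-q}F(X)^q$, and substituting into \eqref{eq:explicit_global_eb} finishes.
\end{itemize}
The only thing to check is the uniform bound $F\le nr$.

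For the impossibility of $q>1$, I would use the curve from the remark after Proposition~\ref{prop:refined_local_eb}, now for general $r$. Take $\bar X\in\mathcal F_{\rm BO}$ and replace the single entry $\bar X_{11}$ by $(1-t)\bar X_{11}$, giving $X_t\in\Omega$. Then $C(X_t)$ changes by:
\begin{itemize}
\item $-(2t-t^2)$ in the $(1,1)$ entry;
\item $-t\,\bar X_{11}\bar X_{1k}$ in each entry $(1,k)$ and $(k,1)$ with $k\ne1$.
\end{itemize}
Therefore $F(X_t)=O(t)$; precisely, $F(X_t)\le c\,t$ with $c$ depending only on $r$.

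The distance is also of order $t$. For $t<1$, the nearest element of the finite set $\mathcal F_{\rm BO}$ is $\bar X$ itself, because every other binary matrix differs from $X_t$ in some entry by at least $1$. So $\dist(X_t,\mathcal F_{\rm BO})=t$. A bound $t\le\kappa F(X_t)^q\le \kappa c^q t^q$ with $q>1$ would force $t^{1-q}\le\kappa c^q$, which fails as $t\downarrow0$.

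This last step is the only one needing care: one must confirm that the distance equals $t$, not something smaller. Finiteness of the feasible set and the componentwise separation of binary matrices handle this.
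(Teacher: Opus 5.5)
Your proof is correct and follows essentially the paper's argument: the case split at $F(X)=1$ with the box-diameter bound, then the uniform estimate $F\le nr$ for the H\"older exponents (the paper skips your second case split by writing $F=F^{1-q}F^q\le(nr)^{1-q}F^q$ directly), and finally a curve into a feasible point along which the residual is only linear. The one difference is the test curve: the paper scales the whole matrix, $(1-t)\bar X$, which gives exactly $\dist=t\sqrt{nr}$ and $F=n\sqrt r\,(2t-t^2)$, while your single-entry perturbation gives $\dist=t$ and $F\le\sqrt{2r+2}\,t$, and either curve rules out $q>1$.
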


\begin{proof}
For $F(X)<1$, Theorem~\ref{thm:finite_bo_identification} gives the stronger estimate
$\dist(X,\mathcal F_{\rm BO})\le F(X)$.  For $F(X)\ge1$,
\eqref{eq:explicit_global_eb} follows from the box diameter
$\dist(X,\mathcal F_{\rm BO})\le2\sqrt{nr}$.  This proves the global
error bound for $q=1$.

If $0<q<1$, each entry of $C(X)$ has absolute value at most $n$, so
$F(X)\le nr$ on $\Omega$ and hence
\(
 F(X)\le(nr)^{1-q}F(X)^q.
\)
Thus the global error bound holds for every $0<q\le1$.
Conversely, fix $\bar X\in\mathcal F_{\rm BO}$ and set
$X(t)=(1-t)\bar X$.  For all
sufficiently small $t>0$, finiteness of $\mathcal F_{\rm BO}$ gives
$ \dist(X(t),\mathcal F_{\rm BO})=t\sqrt{nr}$, and 
$ F(X(t))=n\sqrt r\,(2t-t^2)$. 
A local bound with $q>1$ would imply $t=O(t^q)$ as $t\downarrow0$, which is
impossible.  Hence neither the local nor the global error bound can hold for
$q>1$.
\end{proof}

\subsection{Exact penalization}

We now use the error bounds to move the orthogonality equation into the
objective while retaining the box constraint.  Define
\[
 \underline f:=\min_{X\in\Omega}f(X).
\]
When $\mathcal F_{\rm BO}\ne\emptyset$, also define
\[
 f^*:=\min_{X\in\mathcal F_{\rm BO}}f(X).
\]
For $\mu>0$, consider the penalty problem
\begin{equation}\label{eq:exact_penalty_model}
 \min_{X\in\Omega}  P_\mu(X):=f(X)+\mu F(X).
\end{equation}

\begin{theorem}
\label{thm:global_exact_penalty}
Assume that $\mathcal F_{\rm BO}\ne\emptyset$ and
\begin{equation}\label{eq:global_penalty_threshold}
 \mu>\max\{L_f,f^*-\underline f\}.
\end{equation}
Then
\(
 \arg\min_{X\in\Omega}P_\mu(X)
 =\arg\min_{X\in\mathcal F_{\rm BO}}f(X).
\)
\end{theorem}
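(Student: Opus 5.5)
The plan is to show that every $X\in\Omega\setminus\mathcal F_{\rm BO}$ satisfies $P_\mu(X)>f^*$. Feasible points are then handled directly: on $\mathcal F_{\rm BO}$ we have $F=0$, so $P_\mu=f$ there, and hence $\min_{\mathcal F_{\rm BO}}P_\mu=f^*$. Once the strict inequality is proved, $\min_\Omega P_\mu=f^*$, this minimum is attained only on $\mathcal F_{\rm BO}$, and the minimizers of $P_\mu$ over $\Omega$ are exactly the points of $\mathcal F_{\rm BO}$ with $f=f^*$. Both inclusions of the claimed equality follow at once.

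For infeasible $X\in\Omega$, Lemma~\ref{lem:box-binary-equivalence} gives $C(X)\neq0$, so $F(X)>0$. I will split on the size of the residual, following the two regimes in the proof of Theorem~\ref{thm:global_error_bound}.

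\emph{Case $F(X)<1$.} By Theorem~\ref{thm:finite_bo_identification}, $\widehat X=\sign(X)\in\mathcal F_{\rm BO}$ and $\|X-\widehat X\|_F\le F(X)$. Corollary~\ref{cor:rounding_objective_bound} then gives
\[
f(X)\ge f(\widehat X)-L_fF(X)\ge f^*-L_fF(X).
\]
Hence $P_\mu(X)\ge f^*+(\mu-L_f)F(X)>f^*$, using $\mu>L_f$ and $F(X)>0$.

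\emph{Case $F(X)\ge1$.} Here I use only $f(X)\ge\underline f$. This gives
\[
P_\mu(X)\ge\underline f+\mu F(X)\ge\underline f+\mu>\underline f+(f^*-\underline f)=f^*,
\]
using $\mu>f^*-\underline f$.

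The delicate point is the boundary between the cases: the rounding certificate needs the strict inequality $F(X)<1$, and the second case must absorb $F(X)=1$ exactly. This is fine, since the second case only requires $\mu F(X)\ge\mu$. The constant $L_f$ is the Lipschitz constant on $\Omega$, and both $X$ and $\widehat X$ lie in $\Omega$, so the Lipschitz estimate applies. Finally, $\underline f$ and $f^*$ are finite, by compactness of $\Omega$ and finiteness of $\mathcal F_{\rm BO}$ respectively, so the threshold \eqref{eq:global_penalty_threshold} is a genuine finite number.
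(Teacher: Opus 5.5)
Your proposal is correct and follows the paper's proof. Both reduce the claim to $P_\mu(X)>f^*$ on $\Omega\setminus\mathcal F_{\rm BO}$ and split into the cases $0<F(X)<1$ (sign rounding via Corollary~\ref{cor:rounding_objective_bound} together with $\mu>L_f$) and $F(X)\ge1$ (the bound $P_\mu(X)\ge\underline f+\mu$ together with $\mu>f^*-\underline f$); your remarks on $F(X)>0$ via Lemma~\ref{lem:box-binary-equivalence} and on attainment of the minimum just make explicit what the paper leaves implicit.
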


\begin{proof}
Every global solution of \eqref{eq:BO} has penalized value $f^*$.  Therefore,
it suffices to show that $P_\mu(X)>f^*$ whenever
$X\in\Omega\setminus\mathcal F_{\rm BO}$.  If $0<F(X)<1$, then
Corollary~\ref{cor:rounding_objective_bound} gives
$\widehat X:=\sign(X)\in\mathcal F_{\rm BO}$ and
$f(X)\ge f(\widehat X)-L_fF(X)$.  Hence
\[
 P_\mu(X)
 \ge f(\widehat X)+(\mu-L_f)F(X)>f^*.
\]
If $F(X)\ge1$, then
\[
 P_\mu(X)\ge\underline f+\mu>f^*.
\]
\end{proof}
 
The threshold in \eqref{eq:global_penalty_threshold} depends on the
unknown constrained optimal value $f^*$.  The fully explicit choice
\begin{equation}\label{eq:explicit_global_penalty_threshold}
   \mu>2L_f\sqrt{nr},
\end{equation} 
is sufficient.  Indeed, Lipschitz continuity and the diameter of $\Omega$ give
\[
 f^*-\underline f
 \le\max_{X\in\Omega}f(X)-\min_{X\in\Omega}f(X)
 \le2L_f\sqrt{nr}.
\]
Since $2\sqrt{nr}\ge1$, this choice also ensures $\mu>L_f$ and hence
implies \eqref{eq:global_penalty_threshold}.

The asymptotic modulus in Proposition~\ref{prop:refined_local_eb} yields the
smaller local threshold $\mu>L_f/2$ below, and this threshold is uniformly sharp.

\begin{proposition}
\label{prop:sharp_local_penalty}
Let $\bar X\in\mathcal F_{\rm BO}$.  If $\mu>L_f/2$, then $\bar X$ is a
strict local minimizer of $P_\mu$ over $\Omega$.
\end{proposition}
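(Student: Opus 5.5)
We compare $P_\mu(X)$ with $P_\mu(\bar X)=f(\bar X)$ for $X\in\Omega$ in a small max-norm neighborhood of $\bar X$, and use the refined local error bound in Proposition~\ref{prop:refined_local_eb} as the key tool. Fix $\mu>L_f/2$. Choose $\eta\in(0,1)$ small enough that
\[
 \frac{L_f}{2-\eta}<\mu ,
\]
which is possible because $L_f/(2-\eta)\downarrow L_f/2$ as $\eta\downarrow0$. Consider the neighborhood $U:=\{X\in\Omega:\|X-\bar X\|_{\max}\le\eta\}$ of $\bar X$ relative to $\Omega$.

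For $X\in U$, Proposition~\ref{prop:refined_local_eb} gives $\|X-\bar X\|_F\le F(X)/(2-\eta)$. Lipschitz continuity of $f$ on $\Omega$ then yields
\[
 f(X)\ge f(\bar X)-L_f\|X-\bar X\|_F .
\]
Combining the two estimates,
\[
 P_\mu(X)\ge f(\bar X)+\bigl(\mu(2-\eta)-L_f\bigr)\|X-\bar X\|_F .
\]
The coefficient $\mu(2-\eta)-L_f$ is positive by the choice of $\eta$. Hence $P_\mu(X)>f(\bar X)=P_\mu(\bar X)$ whenever $X\in U$ and $X\neq\bar X$, since $F(\bar X)=0$. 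This is exactly strict local minimality. Unlike the global argument, no case split on $F(X)<1$ is needed, because the local bound applies directly in $U$.

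The only point needing care is that Proposition~\ref{prop:refined_local_eb} bounds $\|X-\bar X\|_F$ itself, and not merely $\dist(X,\mathcal F_{\rm BO})$. This is what lets us compare with $f(\bar X)$ rather than with $f$ at some other feasible point. Otherwise the argument is routine; the main "obstacle" is just choosing $\eta$ so that the strict margin survives, which the displayed choice handles.

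One could also show sharpness, in the spirit of the Remark after Proposition~\ref{prop:refined_local_eb}. Take $r=1$ and the linear function $f(X)=L\,\bar X_i X_i$, moving one entry inward. Then $P_\mu(X(t))-P_\mu(\bar X)=-Lt+\mu(2t-t^2)$, which is negative for small $t$ when $\mu<L/2$. The statement itself only requires the sufficiency direction, however.
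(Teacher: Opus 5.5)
Your proof is correct and matches the paper's argument: you choose $\eta\in(0,1)$ with $\mu>L_f/(2-\eta)$, work on the same max-norm neighborhood, and combine Lipschitz continuity of $f$ with Proposition~\ref{prop:refined_local_eb}. The only cosmetic difference is that you write the lower bound in terms of $\|X-\bar X\|_F$ instead of $F(X)$, so strictness for $X\neq\bar X$ is immediate and you do not need the paper's remark that $\bar X$ is the only binary matrix in the neighborhood.
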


\begin{proof}
Choose $\eta\in(0,1)$ sufficiently small that
$\mu>L_f/(2-\eta)$.  For every
$X\in\Omega$ with $0<\|X-\bar X\|_{\max}\le\eta$,
Proposition~\ref{prop:refined_local_eb} gives
\[ 
 P_\mu(X)-P_\mu(\bar X)
 \ge-L_f\|X-\bar X\|_F+\mu F(X) 
 \ge\left(\mu-\frac{L_f}{2-\eta}\right)F(X)>0. 
\]
Because $\eta<1$, $\bar X$ is the only binary matrix in this neighborhood.
Thus $X\ne\bar X$ implies $F(X)>0$, which makes the last inequality strict.
\end{proof}

\begin{remark}
The threshold $L_f/2$ is sharp uniformly over the admissible problem
dimensions.  Take $r=1$, fix $\bar X\in\{-1,1\}^n$, and set
$f(X)=L_f\bar X_iX_i$ for one index $i$.  Change only that component by
$X_i(t)=(1-t)\bar X_i$.  Then
\[
 P_\mu(X(t))-P_\mu(\bar X)
 =(-L_f+2\mu)t-\mu t^2<0
\]
for every sufficiently small $t>0$ whenever $\mu\le L_f/2$.
\end{remark}

\section{A Smoothing Penalty Method for Binary Orthogonal Optimization}
\label{sec:alg_framework}
 
The exact penalty $P_\mu$ is generally nonsmooth at points where $C(X)=0$.  To obtain smooth
box-constrained subproblems, let $\delta>0$ and define the Huber--Moreau
smoothing $h_\delta:\mathbb S^r\to\mathbb R$ of the Frobenius norm by
\begin{equation}\label{eq:huber_residual}
 h_\delta(S):=
 \begin{cases}
  \|S\|_F^2/(2\delta),&\|S\|_F\le\delta,\\
  \|S\|_F-\delta/2,&\|S\|_F>\delta.
 \end{cases}
\end{equation}
The gradient of $h_\delta$ is the mapping
\(
\nabla h_\delta(S)
 =\frac{S}{\max\{\delta,\|S\|_F\}}
\) for $S\in\mathbb S^r$.
The smoothed penalty objective is
\begin{equation}\label{eq:smoothed_penalty_model}
 P_{\mu,\delta}(X):=f(X)+\mu h_\delta(C(X)),
 \qquad X\in\Omega.
\end{equation}
For $\mu,\delta>0$, the chain rule gives
\begin{equation}\label{eq:smoothed_penalty_gradient}
 \nabla P_{\mu,\delta}(X)
 =\nabla f(X)+2\mu X\nabla h_\delta(C(X)).
\end{equation}
Moreover,
\begin{equation}\label{eq:smoothing_uniform_error}
 0\le\|S\|_F-h_\delta(S)\le\frac\delta2,
\end{equation}
and hence
\begin{equation}\label{eq:smoothed_objective_error}
 0\le P_\mu(X)-P_{\mu,\delta}(X)\le\frac{\mu\delta}{2},
 \qquad X\in\Omega.
\end{equation}
Thus, $P_{\mu,\delta}\to P_\mu$ uniformly on $\Omega$ as
$\delta\downarrow0$.
For a function \(\varphi\) continuously differentiable on a neighborhood
of \(\Omega\) and
\(\alpha>0\), define the gradient mapping
\[
 \mathcal G_\alpha(X;\varphi)
 :=\frac1\alpha\bigl[
 X-\Pi_\Omega(X-\alpha\nabla\varphi(X))\bigr].
\]
For every $\alpha>0$, $\mathcal G_\alpha(X;\varphi)=0$ is equivalent to
$0\in\nabla\varphi(X)+N_\Omega(X)$.  We use
$\|\mathcal G_1(X;\varphi)\|_F$ to measure stationarity.
For fixed \(\mu,\delta>0\), a projected
gradient step for \(P_{\mu,\delta}\) is
\begin{equation}\label{eq:penalty_pg_step}
 X^+=X-\alpha\mathcal G_\alpha(X;P_{\mu,\delta})
 =\Pi_\Omega\bigl(X-\alpha\nabla P_{\mu,\delta}(X)\bigr).
\end{equation}
The projection is componentwise clipping, and the penalty-gradient term in
\eqref{eq:smoothed_penalty_gradient} requires only products involving
\(X\) and the \(r\times r\) matrix \(C(X)\).
 
Once $X\in\Omega$ satisfies
$F(X)<1$, Theorem~\ref{thm:finite_bo_identification} gives a feasible
binary matrix $B=\sign(X)$.
To improve its objective value while
preserving feasibility, we use the following balanced-switch neighborhood.
For $B\in\mathcal F_{\rm BO}$, a column $p$, and a nonempty index set
$T\subseteq\{1,\ldots,n\}$, let $B^{p,T}$ be obtained by replacing $B_{ip}$
with $-B_{ip}$ for every $i\in T$ and leaving all other entries unchanged.
We call $T$ balanced for column $p$ if
\begin{equation}\label{eq:balanced_switch_condition}
 \sum_{i\in T}B_{ip}B_{iq}=0
 \qquad \forall q\ne p.
\end{equation}
Indeed, for every $q\ne p$,
\(
 \bigl[(B^{p,T})_{\cdot p}\bigr]^\top(B^{p,T})_{\cdot q}
 =B_{\cdot p}^\top B_{\cdot q}
 -2\sum_{i\in T}B_{ip}B_{iq}=0,
\)
while all other column inner products remain unchanged.  Hence
$B^{p,T}\in\mathcal F_{\rm BO}$.

For a prescribed integer $m\ge1$, define
\[
 \mathcal N_m(B):=
 \left\{
 B^{p,T}:1\le p\le r,\ 1\le|T|\le m,\
 T\ \text{satisfies \eqref{eq:balanced_switch_condition}}
 \right\}.
\]
Starting from $B^0=\sign(X)$, the balanced-switch local search (BSLS)
replaces the current point by a strictly improving point in
$\mathcal N_m(B)$ and stops when no such point exists.

We next combine the smooth subproblem solver with bounded penalty and
smoothing continuation.
Choose a finite target penalty \(\bar\mu>0\), an initial value
\(\mu_0\in(0,\bar\mu]\), a growth factor \(\gamma_\mu>1\), and smoothing
parameters \(\delta_k\downarrow0\).  The penalty sequence
\(
 \mu_k:=\min\{\bar\mu,\gamma_\mu^k\mu_0\}
\)
reaches \(\bar\mu\) after finitely many stages and remains fixed thereafter.
Although $P_{\bar\mu,\delta}$ is generally not exact for fixed
$\delta>0$, its limiting nonsmooth model $P_{\bar\mu}$ is globally exact
whenever $\bar\mu$ satisfies
\eqref{eq:global_penalty_threshold}. 
A sufficient condition is $\bar\mu>2L_f\sqrt{nr}$ by 
\eqref{eq:explicit_global_penalty_threshold}.

\begin{algorithm}[t]
\caption{SPBO with balanced-switch local refinement}
\label{alg:spbo}
\begin{algorithmic}[1]
\STATE Choose \(X^0\in\Omega\), \(0<\mu_0\le\bar\mu\),
\(\gamma_\mu>1\), an integer $m\ge1$, and sequences
\(\delta_k>0\) and \(\varepsilon_k>0\), both decreasing to zero.
\FOR{\(k=0,1,2,\ldots\)}
 \STATE Set \(\mu_k=\min\{\bar\mu,\gamma_\mu^k\mu_0\}\).
 \STATE Starting from \(X^{k,0}=X^k\), apply projected-gradient steps
 \eqref{eq:penalty_pg_step} to \(P_{\mu_k,\delta_k}\) until an iterate
 \(X^{k+1}\in\Omega\) satisfies
 \[
  \|\mathcal G_1(X^{k+1};P_{\mu_k,\delta_k})\|_F
  \le\varepsilon_k.
 \]
 \IF{\(F(X^{k+1})<1\)}\label{step:spbo_certificate}
   \STATE Set $B^0=\sign(X^{k+1})$ and $B=B^0$.
   \WHILE{there exists $B'\in\mathcal N_m(B)$ with $f(B')<f(B)$}
    \STATE Choose
    $B^+\in\arg\min\{f(B'):B'\in\mathcal N_m(B)\}$ and set $B=B^+$.
   \ENDWHILE
   \STATE Return $X^{k+1}$, $B^0$, and $B^{\rm out}=B$.
 \ENDIF
\ENDFOR
\end{algorithmic}
\end{algorithm}

By Theorem~\ref{thm:finite_bo_identification},
the test in Step~\ref{step:spbo_certificate} guarantees
$B^0=\sign(X^{k+1})\in\mathcal F_{\rm BO}$.
The rounding bound and the monotonicity of
BSLS give
\[
 f(B^{\rm out})
 \le f(B^0)
 \le f(X^{k+1})+L_fF(X^{k+1}).
\]
Since BSLS moves among a finite set of binary matrices and every accepted
move strictly decreases $f$, no iterate can be revisited.  Hence BSLS
terminates after finitely many accepted moves.
Moreover, $B^{\rm out}$ is locally optimal with respect to
$\mathcal N_m$.  The refinement is invoked only after the residual
certificate is satisfied and therefore does not affect the smooth-subproblem
complexity or the stationarity analysis of a nonterminating continuation
sequence.

Let $X^{k,\ell}$ denote the $\ell$th inner iterate at stage $k$,
with $X^{k,0}=X^k$.

\begin{proposition}
\label{prop:smoothed_subproblem_complexity}
Suppose that $\nabla f$ is $L_{\nabla f}$-Lipschitz on $\Omega$.  Fix a
stage $k$, write $\mu=\mu_k>0$ and $\delta=\delta_k>0$, and set
\[
 L_{\mu,\delta}
 :=\max\left\{1,L_{\nabla f}+2\mu\left(1+\frac{2nr}{\delta}\right)\right\}.
\]
Let $\{X^{k,\ell}\}$ be generated by \eqref{eq:penalty_pg_step} with
$\alpha=1/L_{\mu,\delta}$.  Then, for every $N\ge1$,
\begin{equation}\label{eq:pg_complexity}
 \min_{0\le\ell<N}\|\mathcal G_1(X^{k,\ell};P_{\mu,\delta})\|_F
 \le
 \sqrt{\frac{2L_{\mu,\delta}
 [P_{\mu,\delta}(X^{k,0})-P_{\mu,\delta}^{\inf}]}{N}},
\end{equation}
where $P_{\mu,\delta}^{\inf}:=\min_\Omega P_{\mu,\delta}$.  Consequently,
\[
 O\!\left(
 \frac{L_{\mu,\delta}
 [P_{\mu,\delta}(X^{k,0})-P_{\mu,\delta}^{\inf}]}
 {\varepsilon^2}
 \right)
\]
projected-gradient iterations suffice to produce a point $X\in\Omega$ with
$\|\mathcal G_1(X;P_{\mu,\delta})\|_F\le\varepsilon$.
\end{proposition}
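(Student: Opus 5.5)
The plan is the standard descent-lemma analysis of projected gradient for an $L$-smooth function over a convex set, with $L=L_{\mu,\delta}$. It has three steps: (a) show $\nabla P_{\mu,\delta}$ is $L_{\mu,\delta}$-Lipschitz on $\Omega$; (b) obtain a sufficient-decrease inequality for each step with $\alpha=1/L$; (c) telescope, and compare $\|\mathcal G_1\|_F$ with $\|\mathcal G_\alpha\|_F$.

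\emph{Step (a): Lipschitz constant.} This is the main obstacle, since the constant must match the stated one. By \eqref{eq:smoothed_penalty_gradient} it suffices to bound the Lipschitz constant of $X\mapsto 2X\nabla h_\delta(C(X))$ on $\Omega$. I would split the difference as
\[
 X\nabla h_\delta(C(X))-Y\nabla h_\delta(C(Y))
 =(X-Y)\nabla h_\delta(C(X))+Y\bigl[\nabla h_\delta(C(X))-\nabla h_\delta(C(Y))\bigr].
\]
\begin{itemize}
\item For the first term, $\|\nabla h_\delta(S)\|_F\le1$, so it is bounded by $\|X-Y\|_F$.
\item For the second term, $\nabla h_\delta$ is the gradient of a Moreau envelope with parameter $\delta$, hence $1/\delta$-Lipschitz.
\item On $\Omega$ we have $\|Y\|_2\le\|Y\|_F\le\sqrt{nr}$, and
\[
 \|C(X)-C(Y)\|_F=\|X^\top(X-Y)+(X-Y)^\top Y\|_F\le 2\sqrt{nr}\,\|X-Y\|_F .
\]
\end{itemize}
Together these give the bound $(1+2nr/\delta)\|X-Y\|_F$. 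Multiplying by $2\mu$ and adding $L_{\nabla f}$ yields Lipschitz modulus at most $L_{\mu,\delta}$; enlarging to $\max\{1,\cdot\}$ is harmless. A point to check is that $\nabla f$ and the formula hold on a neighborhood of $\Omega$, or that the descent lemma only needs segments in $\Omega$. The latter is true by convexity.

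\emph{Step (b): sufficient decrease.} Write $L=L_{\mu,\delta}$ and $X^+=\Pi_\Omega(X-\nabla P/L)$. The descent lemma gives
\[
 P(X^+)\le P(X)+\langle\nabla P(X),X^+-X\rangle+\tfrac L2\|X^+-X\|_F^2 .
\]
The projection inequality $\langle X-\nabla P/L-X^+,\,X-X^+\rangle\le0$ gives $\langle\nabla P(X),X^+-X\rangle\le -L\|X^+-X\|_F^2$. Hence
\[
 P(X^+)\le P(X)-\tfrac1{2L}\|\mathcal G_{1/L}(X)\|_F^2 .
\]

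\emph{Step (c): telescoping and comparison.} Summing over $\ell=0,\dots,N-1$ and using $P\ge P^{\inf}$, which is attained since $\Omega$ is compact, gives
\[
 \min_\ell\|\mathcal G_{1/L}(X^{k,\ell})\|_F^2\le 2L\,[P(X^{k,0})-P^{\inf}]/N .
\]
It remains to pass from $\mathcal G_{1/L}$ to $\mathcal G_1$. I would invoke the standard monotonicity of the gradient mapping: $\alpha\mapsto\|\mathcal G_\alpha(X)\|_F$ is nonincreasing, while $\alpha\mapsto\alpha\|\mathcal G_\alpha(X)\|_F$ is nondecreasing. Since $1/L\le1$ (this is why the max with $1$ appears), $\|\mathcal G_1(X)\|_F\le\|\mathcal G_{1/L}(X)\|_F$, which gives \eqref{eq:pg_complexity}. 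If citing this feels too heavy, I would prove it briefly from the firm nonexpansiveness of the projection along the ray $X-\alpha\nabla P(X)$. Finally, setting the right-hand side $\le\varepsilon$ gives $N\ge 2L[P(X^{k,0})-P^{\inf}]/\varepsilon^2$, which is the stated complexity.
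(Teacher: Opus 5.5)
Your proposal is correct and follows essentially the same route as the paper. Both split $X\nabla h_\delta(C(X))-Y\nabla h_\delta(C(Y))$ and use $\|\nabla h_\delta\|_F\le1$, the $1/\delta$-Lipschitz property, and $\|C(X)-C(Y)\|_F\le2\sqrt{nr}\,\|X-Y\|_F$, then apply the projected-gradient descent inequality, telescope, and use the stepsize comparison $\|\mathcal G_1\|_F\le\|\mathcal G_{1/L_{\mu,\delta}}\|_F$ made possible by $L_{\mu,\delta}\ge1$. The only cosmetic difference is that you justify the $1/\delta$-Lipschitz property through the Moreau-envelope structure, whereas the paper views $\nabla h_\delta(S)$ as the projection of $S/\delta$ onto the unit Frobenius ball.
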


\begin{proof}
The mapping
$\nabla h_\delta(S)=S/\max\{\delta,\|S\|_F\}$ is the projection of $S/\delta$
onto the unit Frobenius ball.  Hence it is $1/\delta$-Lipschitz and
$\|\nabla h_\delta(S)\|_F\le1$.  For $X,Y\in\Omega$,
\[
 \|C(X)-C(Y)\|_F
 \le(\|X\|_2+\|Y\|_2)\|X-Y\|_F
 \le2\sqrt{nr}\,\|X-Y\|_F,
\]
and therefore
\[
 \begin{aligned}
 &\|X\nabla h_\delta(C(X))-Y\nabla h_\delta(C(Y))\|_F\\
 &\quad\le\|X-Y\|_F
 +\|Y\|_2\|\nabla h_\delta(C(X))-\nabla h_\delta(C(Y))\|_F\\
 &\quad\le\left(1+\frac{2nr}{\delta}\right)\|X-Y\|_F.
 \end{aligned}
\]
Thus $L_{\mu,\delta}$ is a Lipschitz constant for
$\nabla P_{\mu,\delta}$.  The projected-gradient descent estimate gives
\[
 P_{\mu,\delta}(X^{k,\ell+1})
 \le P_{\mu,\delta}(X^{k,\ell})
 -\frac{1}{2L_{\mu,\delta}}
 \|\mathcal G_{1/L_{\mu,\delta}}
 (X^{k,\ell};P_{\mu,\delta})\|_F^2.
\]
Since $L_{\mu,\delta}\ge1$, the standard stepsize comparison for
projected-gradient mappings gives
\[
 \|\mathcal G_1(X;P_{\mu,\delta})\|_F
 \le\|\mathcal G_{1/L_{\mu,\delta}}
 (X;P_{\mu,\delta})\|_F.
\]
Combining this estimate with the descent inequality and summing over
$\ell=0,\ldots,N-1$ proves \eqref{eq:pg_complexity} and the iteration bound.
\end{proof}

For fixed $\mu>0$, $L_{\mu,\delta}=O(\delta^{-1})$ as
$\delta\downarrow0$.  In the implementation, a safeguarded spectral step
and a nonmonotone Armijo line search are used instead of the conservative
fixed step \cite{Grippo86,Grippo02}.

If the test in Step~\ref{step:spbo_certificate} of
Algorithm~\ref{alg:spbo} is never satisfied, the algorithm generates an
infinite continuation sequence.  The following example shows that this
alternative cannot be excluded in general.

\begin{example}
Suppose that $\nabla f(0)=0$.  Since the gradient of the smoothed
penalty term contains the factor $X$, we have
$\nabla P_{\mu,\delta}(0)=0$ for every $\mu,\delta>0$.
If $X^0=0$, every projected-gradient step remains at zero, whereas
\[
 F(0)=n\sqrt r\ge1.
\]
This same point violates the global error-bound nondegeneracy condition
used in the constraint-dissolving exact-penalty framework
\cite[Assumption~3.26]{xiao2025exactpenalty}: indeed,
$DC(0)^*[C(0)]=0$ and $N_\Omega(0)=\{0\}$, whereas $F(0)>0$.
Consequently, the test in Step~\ref{step:spbo_certificate} is never
satisfied.
\end{example}

A point
\(X^*\in\Omega\) is stationary for \(P_\mu\) if there exists
\(V^*\in\mathbb S^r\) such that
\begin{equation}\label{eq:exact_penalty_stationarity}
 0\in\nabla f(X^*)+2\mu X^*V^*+N_\Omega(X^*),
\end{equation}
where
\begin{equation}\label{eq:norm_subgradient}
 V^*=
 \frac{C(X^*)}{F(X^*)}\quad\text{if }F(X^*)>0,
 \qquad
 \|V^*\|_F\le1\quad\text{if }F(X^*)=0.
\end{equation}
Here $\partial\|\cdot\|_F$ denotes the convex subdifferential of the
Frobenius norm on $\mathbb S^r$.  Equivalently,
\(V^*\in\partial\|\cdot\|_F(C(X^*))\), and
\eqref{eq:exact_penalty_stationarity} is the composite first-order
stationarity condition for \(P_\mu\) over \(\Omega\).

\begin{theorem}
\label{thm:spbo_stationarity}
Let $\bar\mu>0$, $\delta_k>0$ with $\delta_k\downarrow0$, and
$\varepsilon_k\downarrow0$.
Suppose that $\{X^{k+1}\}\subset\Omega$ satisfies
\[
 \|\mathcal G_1(X^{k+1};P_{\bar\mu,\delta_k})\|_F
 \le\varepsilon_k
\]
for all sufficiently large $k$.  Then every accumulation point of
\(\{X^{k+1}\}\) is stationary for
\(P_{\bar\mu}\) in the sense of
\eqref{eq:exact_penalty_stationarity}--\eqref{eq:norm_subgradient}.
\end{theorem}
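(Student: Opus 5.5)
Take an accumulation point $X^*$ and a subsequence $X^{k+1}\to X^*$ for $k\in\mathcal K$; $X^*\in\Omega$ because $\Omega$ is closed. Set $V_k:=\nabla h_{\delta_k}(C(X^{k+1}))=C(X^{k+1})/\max\{\delta_k,F(X^{k+1})\}$. As noted in the proof of Proposition~\ref{prop:smoothed_subproblem_complexity}, $\|V_k\|_F\le1$. So, after passing to a further subsequence, $V_k\to V^*$ for some $V^*\in\mathbb S^r$ with $\|V^*\|_F\le1$. I then check \eqref{eq:norm_subgradient} in two cases.
\begin{itemize}
\item If $F(X^*)>0$, continuity gives $F(X^{k+1})\to F(X^*)>0$, while $\delta_k\to0$. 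Hence $\max\{\delta_k,F(X^{k+1})\}=F(X^{k+1})$ for large $k$, so $V_k=C(X^{k+1})/F(X^{k+1})\to C(X^*)/F(X^*)$.
\item If $F(X^*)=0$, only $\|V^*\|_F\le1$ is required, and this holds.
\end{itemize}
In both cases $V^*\in\partial\|\cdot\|_F(C(X^*))$.

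Next I pass to the limit in the gradient mapping. Write $G_k:=\nabla P_{\bar\mu,\delta_k}(X^{k+1})=\nabla f(X^{k+1})+2\bar\mu X^{k+1}V_k$, using \eqref{eq:smoothed_penalty_gradient}. Since $\nabla f$ is continuous and $V_k\to V^*$, we get $G_k\to G^*:=\nabla f(X^*)+2\bar\mu X^*V^*$. The hypothesis says
\[
\|X^{k+1}-\Pi_\Omega(X^{k+1}-G_k)\|_F\le\varepsilon_k .
\]
The projection $\Pi_\Omega$ is nonexpansive, so the map $(X,G)\mapsto X-\Pi_\Omega(X-G)$ is continuous. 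Letting $k\to\infty$ along the subsequence gives $X^*=\Pi_\Omega(X^*-G^*)$. By the variational characterization of projection onto a convex set, this is equivalent to $-G^*\in N_\Omega(X^*)$, which is exactly \eqref{eq:exact_penalty_stationarity}.

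The only delicate point is the first case: identifying the limit of $V_k$ with the true gradient direction $C(X^*)/F(X^*)$ rather than some arbitrary unit-ball element. This works because $\delta_k\downarrow0$ eventually places the iterates in the non-quadratic branch of the Huber function. No error bound or constraint qualification is needed, since only subgradient closedness is used. The hypothesis that $\mu_k=\bar\mu$ for all large $k$ is what lets us use the fixed $\bar\mu$ throughout. If the penalty parameter varied instead, one would have to carry $\mu_k\to\bar\mu$ along in the limit of $G_k$.
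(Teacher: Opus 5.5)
Your proof is correct and follows the paper's argument. You use the same bounded sequence $V_k=\nabla h_{\delta_k}(C(X^{k+1}))$, the same two-case identification of its limit, and a limit passage in the projected-gradient residual. The one cosmetic difference is the final step: you pass to the limit inside $\Pi_\Omega$ using its continuity and then apply the projection characterization once at $X^*$, whereas the paper writes the optimality condition for $Z^k$ at each $k$ and invokes closedness of the graph of $N_\Omega$.
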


\begin{proof}
Let a subsequence, indexed again by \(k\), converge to \(X^*\), and set
\(
 V^k:=\nabla h_{\delta_k}(C(X^{k+1})).
\)
Since \(\|V^k\|_F\le1\), a further subsequence converges to some \(V^*\).
If \(F(X^*)>0\), continuity gives
\(V^*=C(X^*)/F(X^*)\); if \(F(X^*)=0\), the bound
\(\|V^*\|_F\le1\) remains.  Thus \eqref{eq:norm_subgradient} holds.
Let
\(
 Z^k:=\Pi_\Omega\bigl(
 X^{k+1}-\nabla P_{\bar\mu,\delta_k}(X^{k+1})\bigr).
\)
The residual condition gives \(Z^k-X^{k+1}\to0\).  The characterization of
Euclidean projection yields
\[
 0\in\nabla P_{\bar\mu,\delta_k}(X^{k+1})
 +(Z^k-X^{k+1})+N_\Omega(Z^k).
\]
Using \eqref{eq:smoothed_penalty_gradient}, taking limits, and invoking the
closedness of the normal-cone graph give
\[
 0\in\nabla f(X^*)+2\bar\mu X^*V^*+N_\Omega(X^*).
\]
\end{proof}
 
\section{Numerical Experiments}
\label{sec:numerical}

We evaluate \alg{SPBO} through four questions arising from the preceding
analysis.  The first experiment examines the residual thresholds used by the
rounding guarantees.  The remaining experiments test the penalty method on
orthogonal binary projection, supervised hashing, and Hadamard completion.
All experiments were run in MATLAB on a Windows workstation with an
Intel\textregistered\ Core\texttrademark\ i9-12900HX CPU and 32\,GB RAM.
For the three application experiments, SPBO uses
$\varepsilon_{\rm opt}=\varepsilon_{\rm feas}=10^{-3}$, at most $200$
continuation stages and $250$ inner iterations per stage, automatically
initialized $\mu_0$ with balance factor $1$, and $\gamma_\mu=1.5$.
At stage $k$, the inner solver uses the stopping criterion
\[
 \|\mathcal G_1(X;P_{\mu_k,\delta_k})\|_{\max}
 \le \max\{\varepsilon_{\rm opt},
          \min\{10^{-2},\sqrt{\delta_k}\}\},
\]
subject to the prescribed iteration limit.
The feasibility tolerance $\varepsilon_{\rm feas}$ is applied to
the continuous residual $F(X)$.
Across these experiments, MGBO uses $\gamma=0.2$ and $\rho=10^{-2}$.
For projection and Hadamard completion, its penalty parameter is $90$,
while AM uses $\rho=50$ and ABMO uses $\alpha=0.5$.
We report objective value or retrieval
quality, orthogonality violation, and CPU time separately.  For SPBO, the
reported binary output is obtained by sign rounding the final box-constrained
iterate.  Both the objective value and orthogonality violation are evaluated
at this rounded output.


\subsection{Error-Bound and Rounding Certificates}
\label{subsec:error_bound_experiment}

This experiment tests the feasibility certificates and rounding-distance
bounds of Theorem~\ref{thm:finite_bo_identification} and
Corollary~\ref{cor:max_residual_certificate} along SPBO trajectories.
We use $20$ Gaussian projection instances with $n=64$ and $r=8$.
Here $\varepsilon_{\rm opt}=\varepsilon_{\rm feas}=10^{-8}$,
with at most $30$ continuation stages and $12$ inner iterations per stage.
The penalty initialization is automatic with balance factor $1$ and
$\gamma_\mu=1.5$; we set $\delta_0=10^{-2}$, reduce $\delta_k$ by a factor
of $0.5$ per stage, and continue recording iterates after sign rounding
becomes feasible.
Figure~\ref{fig:error_bound_certificates} displays the
Frobenius- and maximum-residual trajectories, rounding distances, rounded
orthogonality residuals, and successful-rounding percentages.

\begin{figure}[H]
\centering
\subfigure[Frobenius-residual certificate.]{%
\includegraphics[width=0.96\textwidth]{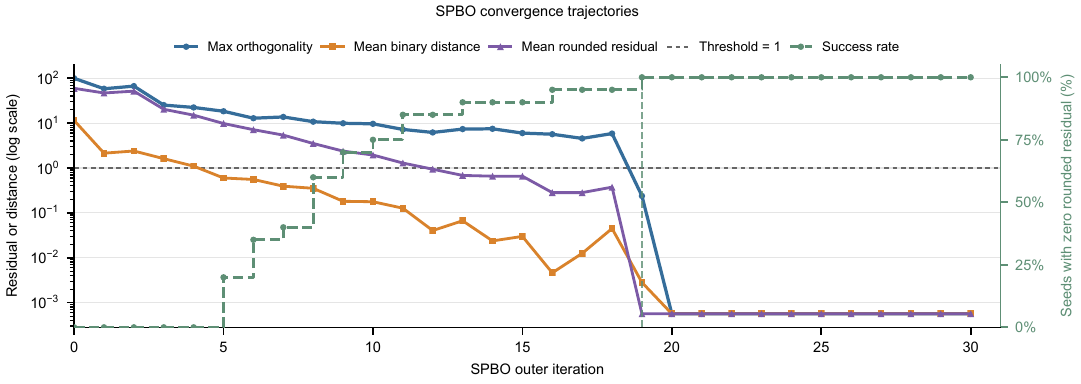}%
\label{fig:error_bound_frobenius_certificate}}
\par\medskip
\subfigure[Maximum-residual certificate.]{%
\includegraphics[width=0.96\textwidth]{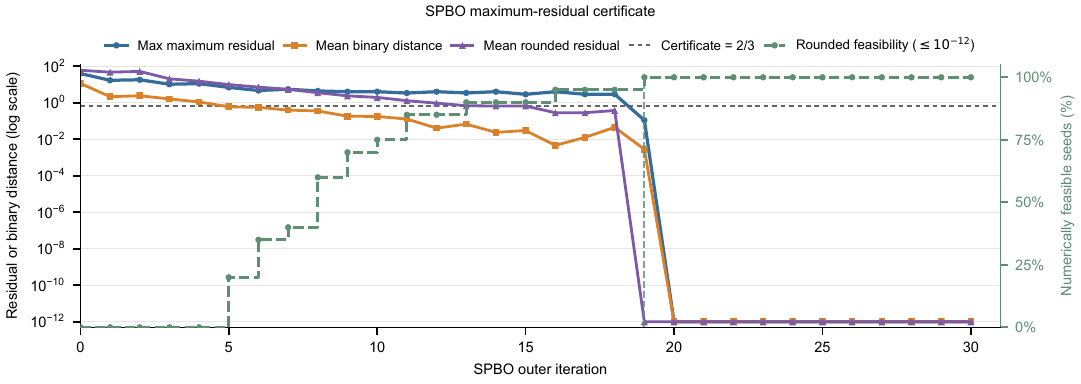}%
\label{fig:error_bound_max_certificate}}
\caption{Residual-certificate trajectories for 20 runs with $n=64$ and
$r=8$.}
\label{fig:error_bound_certificates}
\end{figure}


\subsection{Orthogonal Binary Projection}
\label{subsec:proj}

To examine how the penalty method reduces orthogonality violation in a
continuous data-fitting problem,
we consider
\begin{equation}
\min_{X\in\{-1,+1\}^{n\times r}} \|X-Z\|_F^2
\quad\text{s.t.}\quad X^\top X=nI_r,
\label{eq:proj}
\end{equation}
where the entries of $Z\in\mathbb R^{n\times r}$ are sampled independently
from $\mathcal N(0,1)$.  The row dimensions are 500, 1000, 2000, 5000, and
8000; the ranks are 16, 32, and 64; and each configuration is evaluated in
ten independent trials.  The comparison includes
\alg{MGBO}, \alg{Relax}, \alg{AM}, \alg{ABMO}, and \alg{SPBO}.
MGBO, AM, and ABMO use a maximum of $2000$ iterations and a stopping
tolerance of $10^{-3}$; MGBO uses $50$ subiterations.
Figure~\ref{fig:proj_scaling} and Table~\ref{tab:proj_orth_performance_all}
report the mean performance over ten independent trials.  SPBO has the smallest mean orthogonality
violation in all 15 $(n,r)$ configurations.  Compared with the next-smallest
mean violation, it is lower by factors ranging from $3.24$ to
$1.04\times10^2$.  Its objective ranks second in eight configurations and
third in seven, whereas its CPU time is largest in all 15 configurations.

\begin{figure}[tbp]
\centering
\includegraphics[width=\textwidth]{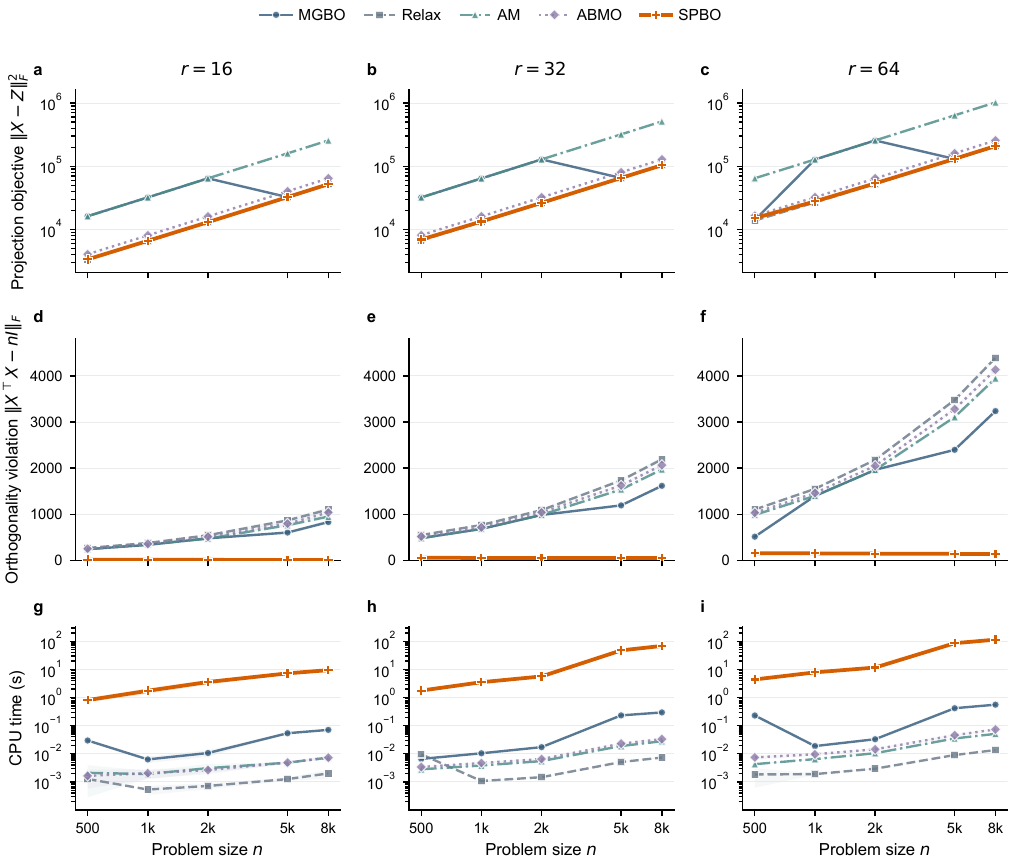}
\caption{Squared projection objective $\|X-Z\|_F^2$, orthogonality violation,
and CPU time over ten independent trials. Curves show sample means; shaded
bands indicate two-sided $95\%$ confidence intervals.}
\label{fig:proj_scaling}
\end{figure}

\begin{sidewaystable}[p]
\centering
\scriptsize
\setlength{\tabcolsep}{3.2pt}
\renewcommand{\arraystretch}{0.82}
\caption{Performance comparison on randomly generated orthogonal binary projection problems for $r\in\{16,32,64\}$. The objective is $\|X-Z\|_F^2$.}
\label{tab:proj_orth_performance_all}
\vspace{-0.45em}
\noindent\makebox[\linewidth][c]{%
\resizebox{0.96\linewidth}{!}{%
\begin{tabular}{@{}cccccc@{}}
\toprule
\multirow{2}{*}{$n$} & \multicolumn{5}{c}{Squared objective $\|X-Z\|_F^2$ ($r=16$)} \\ \cmidrule(lr){2-6}
& MGBO & Relax & AM & ABMO & SPBO \\
500 & 1.619e+04 $\pm$ 1.904e+02 & \textbf{3.308e+03 $\pm$ 5.263e+01} & 1.613e+04 $\pm$ 1.887e+02 & 4.036e+03 $\pm$ 7.595e+01 & 3.349e+03 $\pm$ 5.446e+01 \\
1000 & 3.209e+04 $\pm$ 2.941e+02 & \textbf{6.538e+03 $\pm$ 7.219e+01} & 3.210e+04 $\pm$ 2.938e+02 & 8.028e+03 $\pm$ 1.007e+02 & 6.587e+03 $\pm$ 7.372e+01 \\
2000 & 6.427e+04 $\pm$ 2.844e+02 & \textbf{1.296e+04 $\pm$ 7.578e+01} & 6.429e+04 $\pm$ 3.042e+02 & 1.601e+04 $\pm$ 1.240e+02 & 1.302e+04 $\pm$ 7.725e+01 \\
5000 & 3.238e+04 $\pm$ 7.431e+01 & \textbf{3.237e+04 $\pm$ 7.179e+01} & 1.598e+05 $\pm$ 6.545e+02 & 4.006e+04 $\pm$ 1.038e+02 & 3.242e+04 $\pm$ 7.387e+01 \\
8000 & 5.189e+04 $\pm$ 2.410e+02 & \textbf{5.188e+04 $\pm$ 2.401e+02} & 2.561e+05 $\pm$ 9.080e+02 & 6.419e+04 $\pm$ 3.409e+02 & 5.193e+04 $\pm$ 2.402e+02 \\
\midrule
\multirow{2}{*}{$n$} & \multicolumn{5}{c}{Orthogonality violation ($r=16$)} \\ \cmidrule(lr){2-6}
& MGBO & Relax & AM & ABMO & SPBO \\
500 & 2.440e+02 $\pm$ 2.342e+01 & 2.695e+02 $\pm$ 1.440e+01 & 2.435e+02 $\pm$ 2.282e+01 & 2.537e+02 $\pm$ 1.402e+01 & \textbf{1.763e+01 $\pm$ 2.880e+00} \\
1000 & 3.367e+02 $\pm$ 1.988e+01 & 3.780e+02 $\pm$ 1.339e+01 & 3.458e+02 $\pm$ 1.820e+01 & 3.588e+02 $\pm$ 2.273e+01 & \textbf{1.308e+01 $\pm$ 2.362e+00} \\
2000 & 4.797e+02 $\pm$ 3.465e+01 & 5.466e+02 $\pm$ 2.950e+01 & 4.728e+02 $\pm$ 4.017e+01 & 5.131e+02 $\pm$ 3.386e+01 & \textbf{1.350e+01 $\pm$ 4.719e+00} \\
5000 & 6.047e+02 $\pm$ 4.373e+01 & 8.678e+02 $\pm$ 6.270e+01 & 7.641e+02 $\pm$ 4.998e+01 & 7.969e+02 $\pm$ 6.200e+01 & \textbf{7.971e+00 $\pm$ 2.419e+00} \\
8000 & 8.318e+02 $\pm$ 6.221e+01 & 1.105e+03 $\pm$ 6.619e+01 & 9.511e+02 $\pm$ 6.816e+01 & 1.039e+03 $\pm$ 5.332e+01 & \textbf{8.029e+00 $\pm$ 4.364e+00} \\
\bottomrule
\end{tabular}
}
}\par
\vspace{0.08em}
\noindent\makebox[\linewidth][c]{%
\resizebox{0.96\linewidth}{!}{%
\begin{tabular}{@{}cccccc@{}}
\toprule
\multirow{2}{*}{$n$} & \multicolumn{5}{c}{Squared objective $\|X-Z\|_F^2$ ($r=32$)} \\ \cmidrule(lr){2-6}
& MGBO & Relax & AM & ABMO & SPBO \\
500 & 3.201e+04 $\pm$ 2.312e+02 & \textbf{6.688e+03 $\pm$ 6.786e+01} & 3.202e+04 $\pm$ 2.488e+02 & 8.101e+03 $\pm$ 8.353e+01 & 6.895e+03 $\pm$ 8.344e+01 \\
1000 & 6.410e+04 $\pm$ 3.658e+02 & \textbf{1.312e+04 $\pm$ 1.148e+02} & 6.415e+04 $\pm$ 3.543e+02 & 1.603e+04 $\pm$ 1.624e+02 & 1.332e+04 $\pm$ 1.116e+02 \\
2000 & 1.282e+05 $\pm$ 7.306e+02 & \textbf{2.612e+04 $\pm$ 8.569e+01} & 1.283e+05 $\pm$ 6.660e+02 & 3.214e+04 $\pm$ 5.666e+01 & 2.631e+04 $\pm$ 9.746e+01 \\
5000 & 6.504e+04 $\pm$ 1.569e+02 & \textbf{6.502e+04 $\pm$ 1.574e+02} & 3.197e+05 $\pm$ 1.258e+03 & 8.019e+04 $\pm$ 1.942e+02 & 6.521e+04 $\pm$ 1.398e+02 \\
8000 & 1.037e+05 $\pm$ 3.230e+02 & \textbf{1.036e+05 $\pm$ 3.230e+02} & 5.122e+05 $\pm$ 1.539e+03 & 1.281e+05 $\pm$ 3.544e+02 & 1.038e+05 $\pm$ 3.223e+02 \\
\midrule
\multirow{2}{*}{$n$} & \multicolumn{5}{c}{Orthogonality violation ($r=32$)} \\ \cmidrule(lr){2-6}
& MGBO & Relax & AM & ABMO & SPBO \\
500 & 4.840e+02 $\pm$ 1.678e+01 & 5.504e+02 $\pm$ 1.407e+01 & 4.823e+02 $\pm$ 1.378e+01 & 5.193e+02 $\pm$ 1.240e+01 & \textbf{5.636e+01 $\pm$ 1.984e+00} \\
1000 & 6.828e+02 $\pm$ 2.175e+01 & 7.684e+02 $\pm$ 2.073e+01 & 6.865e+02 $\pm$ 2.071e+01 & 7.175e+02 $\pm$ 2.480e+01 & \textbf{5.542e+01 $\pm$ 2.006e+00} \\
2000 & 9.859e+02 $\pm$ 3.587e+01 & 1.091e+03 $\pm$ 2.626e+01 & 9.878e+02 $\pm$ 3.675e+01 & 1.040e+03 $\pm$ 3.523e+01 & \textbf{5.387e+01 $\pm$ 1.069e+00} \\
5000 & 1.193e+03 $\pm$ 2.815e+01 & 1.734e+03 $\pm$ 4.586e+01 & 1.538e+03 $\pm$ 4.204e+01 & 1.624e+03 $\pm$ 4.281e+01 & \textbf{5.261e+01 $\pm$ 1.731e+00} \\
8000 & 1.617e+03 $\pm$ 5.033e+01 & 2.191e+03 $\pm$ 6.877e+01 & 1.972e+03 $\pm$ 3.525e+01 & 2.066e+03 $\pm$ 5.746e+01 & \textbf{5.220e+01 $\pm$ 1.837e+00} \\
\bottomrule
\end{tabular}
}
}\par
\vspace{0.08em}
\noindent\makebox[\linewidth][c]{%
\resizebox{0.96\linewidth}{!}{%
\begin{tabular}{@{}cccccc@{}}
\toprule
\multirow{2}{*}{$n$} & \multicolumn{5}{c}{Squared objective $\|X-Z\|_F^2$ ($r=64$)} \\ \cmidrule(lr){2-6}
& MGBO & Relax & AM & ABMO & SPBO \\
500 & 1.404e+04 $\pm$ 1.176e+02 & \textbf{1.380e+04 $\pm$ 1.162e+02} & 6.445e+04 $\pm$ 3.190e+02 & 1.637e+04 $\pm$ 1.550e+02 & 1.507e+04 $\pm$ 1.143e+02 \\
1000 & 1.279e+05 $\pm$ 6.314e+02 & \textbf{2.672e+04 $\pm$ 1.573e+02} & 1.279e+05 $\pm$ 7.189e+02 & 3.228e+04 $\pm$ 1.856e+02 & 2.770e+04 $\pm$ 1.261e+02 \\
2000 & 2.560e+05 $\pm$ 7.370e+02 & \textbf{5.256e+04 $\pm$ 1.734e+02} & 2.560e+05 $\pm$ 7.163e+02 & 6.408e+04 $\pm$ 2.727e+02 & 5.347e+04 $\pm$ 1.747e+02 \\
5000 & 1.303e+05 $\pm$ 3.740e+02 & \textbf{1.302e+05 $\pm$ 3.813e+02} & 6.397e+05 $\pm$ 1.452e+03 & 1.602e+05 $\pm$ 4.138e+02 & 1.310e+05 $\pm$ 3.845e+02 \\
8000 & 2.078e+05 $\pm$ 1.822e+02 & \textbf{2.077e+05 $\pm$ 1.903e+02} & 1.024e+06 $\pm$ 1.686e+03 & 2.562e+05 $\pm$ 2.581e+02 & 2.086e+05 $\pm$ 1.587e+02 \\
\midrule
\multirow{2}{*}{$n$} & \multicolumn{5}{c}{Orthogonality violation ($r=64$)} \\ \cmidrule(lr){2-6}
& MGBO & Relax & AM & ABMO & SPBO \\
500 & 5.148e+02 $\pm$ 8.128e+00 & 1.104e+03 $\pm$ 1.967e+01 & 9.869e+02 $\pm$ 8.106e+00 & 1.033e+03 $\pm$ 1.163e+01 & \textbf{1.589e+02 $\pm$ 2.375e+00} \\
1000 & 1.397e+03 $\pm$ 1.816e+01 & 1.551e+03 $\pm$ 1.931e+01 & 1.400e+03 $\pm$ 2.402e+01 & 1.462e+03 $\pm$ 1.901e+01 & \textbf{1.537e+02 $\pm$ 2.023e+00} \\
2000 & 1.963e+03 $\pm$ 3.040e+01 & 2.180e+03 $\pm$ 3.540e+01 & 1.961e+03 $\pm$ 1.546e+01 & 2.051e+03 $\pm$ 2.183e+01 & \textbf{1.476e+02 $\pm$ 3.210e+00} \\
5000 & 2.398e+03 $\pm$ 4.369e+01 & 3.476e+03 $\pm$ 6.006e+01 & 3.108e+03 $\pm$ 5.117e+01 & 3.281e+03 $\pm$ 5.445e+01 & \textbf{1.425e+02 $\pm$ 2.512e+00} \\
8000 & 3.237e+03 $\pm$ 4.825e+01 & 4.390e+03 $\pm$ 7.056e+01 & 3.947e+03 $\pm$ 7.011e+01 & 4.133e+03 $\pm$ 4.995e+01 & \textbf{1.415e+02 $\pm$ 2.772e+00} \\
\bottomrule
\end{tabular}
}
}\par
\vspace{2pt}
\parbox{0.96\linewidth}{\footnotesize Note: each entry is reported as mean $\pm$ standard deviation over 10 independent trials. Boldface marks the best mean value in each row.}
\end{sidewaystable}


\subsection{Supervised Hashing for Image Retrieval}
\label{subsec:hashing}

Supervised hashing tests the penalty strategy when solution quality is
measured by retrieval performance.

Following Xiao et al.~\cite{xiao2026relaxation}, let
$Y\in\mathbb R^{n\times m}$ be
the label matrix, $H\in\{-1,1\}^{n\times r}$ the binary code matrix, and
$W\in\mathbb R^{r\times m}$ a projection matrix.  A supervised model is
\begin{equation}
\min_{H\in\{-1,1\}^{n\times r},\,W\in\mathbb R^{r\times m}}
\|rY-HW\|_F^2+\|W\|_F^2
\quad\text{s.t.}\quad H^\top H=nI_r.
\label{eq:hashing}
\end{equation}
In the implementation, we solve a fixed-parameter subproblem of the joint model
\eqref{eq:hashing}. The parameter $W_0$ is obtained by ridge regression from
the initial code matrix $H^0$ and $rY$, and is held fixed during the SPBO solve.
For fixed $W_0$, the objective in \eqref{eq:hashing} reduces, on the
orthogonality-constrained set, to minimizing
$-\langle H^\top(rY),W_0\rangle$.  We use this linear objective, scaled by
$\sqrt n$, in the box-constrained penalty subproblems.  After sign rounding,
we recompute the projection matrix to evaluate retrieval performance.
Unlike the model in \cite{xiao2026relaxation}, we do not impose a separate
bit-balance constraint.
We use MNIST, CIFAR-10, Caltech-256 with 2048-dimensional handcrafted
features, and Caltech-256 with 1024-dimensional CNN features.  Each dataset
uses one fixed random split and initialization, and the code lengths are
$r\in\{16,32,48,64,96\}$.  The nine methods are \alg{SPBO}, \alg{MGBO},
\alg{DPLM}, \alg{ABMO}, \alg{SDH}, \alg{DCSH}, \alg{FSDH}, \alg{LFH}, and
\alg{COSDISH}.  MGBO uses $30$ outer iterations, five subiterations,
function and gradient tolerances of $10^{-5}$
and $10^{-10}$, respectively, and penalty parameter 200.  DPLM and ABMO use
at least 20 iterations with stopping tolerance $10^{-6}$; SDH uses at most
200 iterations with tolerance $10^{-6}$; and LFH uses 50 stochastic-learning
iterations.

\begin{figure}[!tbp]
\centering
\includegraphics[width=0.94\textwidth]{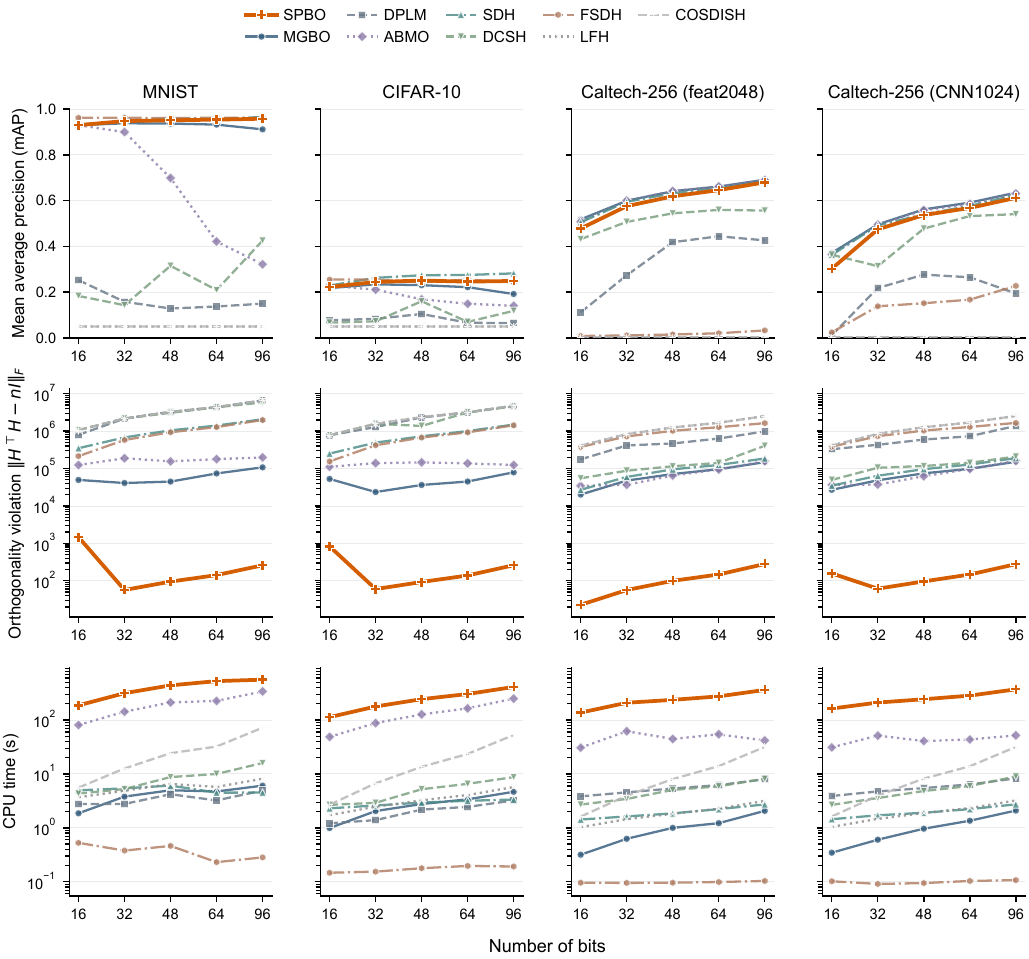}
\caption{Retrieval quality, orthogonality violation, and CPU time across four
datasets and five code lengths.}
\label{fig:hashing_summary}
\end{figure}

Figure~\ref{fig:hashing_summary} reports mean average precision (mAP),
orthogonality violation, and CPU time for all four datasets.  SPBO has the
smallest orthogonality violation in all 20
dataset--code-length combinations.
Compared with the next-smallest violation, it is lower by factors ranging
from $33.7$ to $885$.  Its mAP ranks in the top three in nine combinations,
fourth in ten, and fifth in one.  SPBO has the largest CPU time in all 20
combinations.


\FloatBarrier
\subsection{Hadamard Matrix Completion}
\label{subsec:hadamard}

Finally, Hadamard completion tests orthogonal feasibility when only a subset of
the target entries is observed. Let $\mathcal I$ denote the observed index set,
let $\widetilde X$ contain the observed entries, and let
$\mathcal P_{\mathcal I}$ retain entries indexed by $\mathcal I$ and set all
other entries to zero. We solve
\begin{equation}
\min_{X\in\{-1,+1\}^{n\times n}}
\|\mathcal P_{\mathcal I}(X-\widetilde X)\|_F^2
\quad\text{s.t.}\quad X^\top X=nI_n.
\label{eq:hadamard}
\end{equation}
We use
$n\in\{16,20,24,32,40,48,64,80,96\}$, 150 observation ratios equally
spaced from $0.10$ to $0.90$, and 30 independent runs at every grid point.  The
comparison includes \alg{SPBO}, \alg{MGBO}, \alg{Relax}, \alg{AM}, and
\alg{ABMO}.  Relax, AM, ABMO, and MGBO use a maximum of $1000$ iterations;
their common stopping tolerance is $10^{-4}$, except for Relax's function
tolerance of $10^{-12}$.  MGBO uses $20$ subiterations.
A run is numerically feasible when
$\|X^\top X-nI_n\|_F<10^{-2}$.

Figure~\ref{fig:hadamard_ridgeline} summarizes the mean orthogonality
violation over 30 independent runs.  At the grid point nearest $0.5$,
$\rho_{\rm obs}=0.4973$, SPBO has the lowest mean violation for all five
orders in Table~\ref{tab:hadamard_compare}.  At this observation ratio, SPBO
does not minimize the observed-entry objective or CPU time.
The observed-entry objective and orthogonality violation should be interpreted
together: a small observed-entry error alone does not indicate that the
rounded matrix is orthogonally feasible.
Table~\ref{tab:hadamard_phase_transition} summarizes the empirical success
rates through the first observation ratio at which at least 50\% or 100\% of
the runs are feasible.  SPBO reaches both success levels at the smallest
observation ratio for every reported order.

\begin{figure}[H]
\centering
\includegraphics[width=\textwidth]{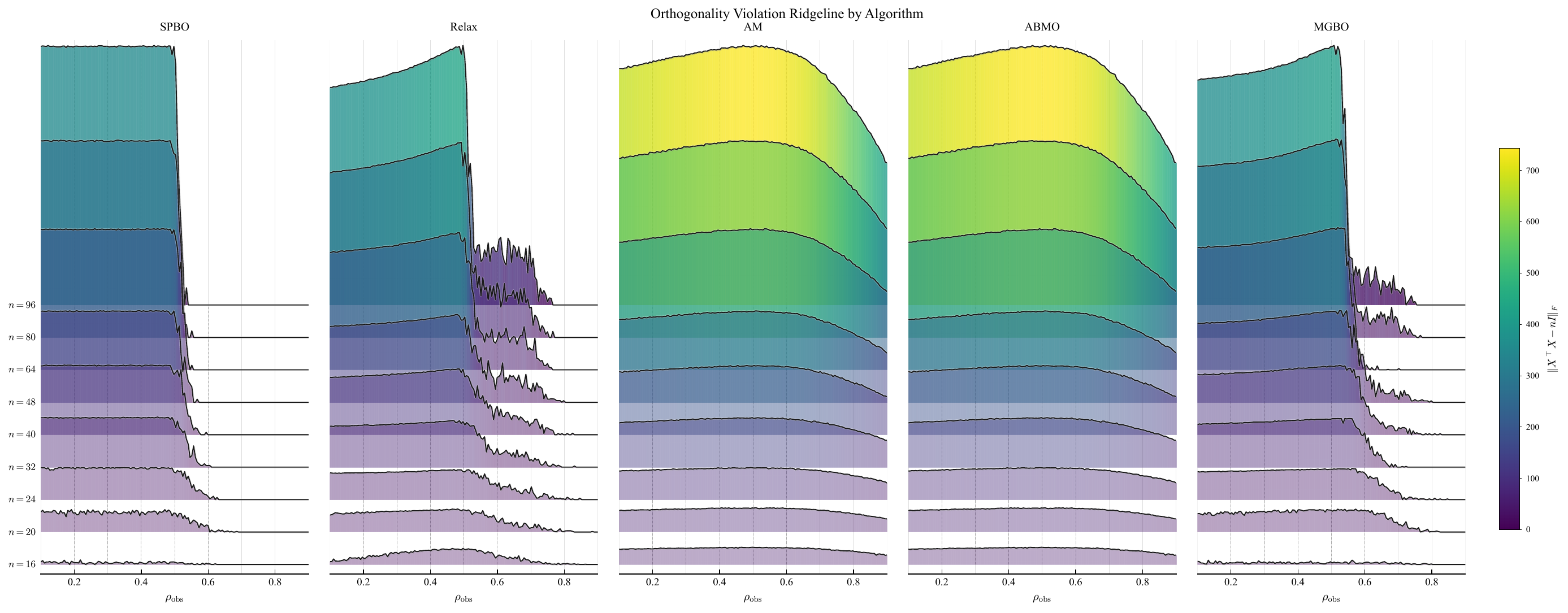}
\caption{Orthogonality violation versus observation ratio for Hadamard
completion over 30 independent runs.}
\label{fig:hadamard_ridgeline}
\end{figure}

\begin{table*}[t]
\centering
\scriptsize
\setlength{\tabcolsep}{3.2pt}
\renewcommand{\arraystretch}{1.06}
\caption{Hadamard observed-entry comparison at $\rho_{\rm obs}=0.4973$. Each entry reports mean $\pm$ sample standard deviation over the available runs; bold marks the best mean.}
\label{tab:hadamard_compare}
\resizebox{\linewidth}{!}{%
\begin{tabular}{@{}lccccc@{}}
\toprule
Method & $n=16$ & $n=32$ & $n=48$ & $n=64$ & $n=96$ \\
\midrule
\multicolumn{6}{l}{\textit{Observed-entry objective}} \\
SPBO & 2.787e+01 $\pm$ 9.247e+00 & 1.437e+02 $\pm$ 5.126e+01 & 3.463e+02 $\pm$ 6.493e+01 & 6.515e+02 $\pm$ 6.726e+01 & 1.628e+03 $\pm$ 1.422e+02 \\
MGBO & 6.507e+01 $\pm$ 1.713e+01 & 5.000e+01 $\pm$ 1.650e+01 & 3.507e+01 $\pm$ 1.412e+01 & 2.173e+01 $\pm$ 8.832e+00 & 4.933e+00 $\pm$ 4.660e+00 \\
Relax & 1.333e-01 $\pm$ 7.303e-01 & \textbf{0.000e+00 $\pm$ 0.000e+00} & \textbf{0.000e+00 $\pm$ 0.000e+00} & \textbf{0.000e+00 $\pm$ 0.000e+00} & \textbf{0.000e+00 $\pm$ 0.000e+00} \\
AM & \textbf{0.000e+00 $\pm$ 0.000e+00} & \textbf{0.000e+00 $\pm$ 0.000e+00} & \textbf{0.000e+00 $\pm$ 0.000e+00} & \textbf{0.000e+00 $\pm$ 0.000e+00} & \textbf{0.000e+00 $\pm$ 0.000e+00} \\
ABMO & \textbf{0.000e+00 $\pm$ 0.000e+00} & \textbf{0.000e+00 $\pm$ 0.000e+00} & \textbf{0.000e+00 $\pm$ 0.000e+00} & \textbf{0.000e+00 $\pm$ 0.000e+00} & \textbf{0.000e+00 $\pm$ 0.000e+00} \\
\midrule
\multicolumn{6}{l}{\textit{Orthogonality violation $\|X^\top X-nI_n\|_F$}} \\
SPBO & \textbf{3.477e+00 $\pm$ 8.097e+00} & \textbf{6.305e+01 $\pm$ 1.722e+01} & \textbf{1.250e+02 $\pm$ 2.285e+00} & \textbf{1.917e+02 $\pm$ 2.377e+00} & \textbf{3.540e+02 $\pm$ 3.011e+00} \\
MGBO & 4.333e+00 $\pm$ 9.181e+00 & 7.701e+01 $\pm$ 2.312e+00 & 1.459e+02 $\pm$ 3.114e+00 & 2.263e+02 $\pm$ 3.528e+00 & 4.145e+02 $\pm$ 4.581e+00 \\
Relax & 2.706e+01 $\pm$ 2.198e+00 & 7.684e+01 $\pm$ 1.466e+01 & 1.388e+02 $\pm$ 3.786e+01 & 2.169e+02 $\pm$ 5.907e+01 & 4.382e+02 $\pm$ 5.332e+00 \\
AM & 4.927e+01 $\pm$ 3.386e+00 & 1.419e+02 $\pm$ 5.552e+00 & 2.592e+02 $\pm$ 4.517e+00 & 4.014e+02 $\pm$ 7.437e+00 & 7.401e+02 $\pm$ 5.924e+00 \\
ABMO & 4.927e+01 $\pm$ 3.386e+00 & 1.419e+02 $\pm$ 5.552e+00 & 2.592e+02 $\pm$ 4.517e+00 & 4.014e+02 $\pm$ 7.437e+00 & 7.401e+02 $\pm$ 5.924e+00 \\
\midrule
\multicolumn{6}{l}{\textit{CPU time (seconds)}} \\
SPBO & 4.033e-03 $\pm$ 4.029e-03 & 5.852e-02 $\pm$ 2.106e-02 & 2.509e-01 $\pm$ 4.903e-02 & 1.227e+00 $\pm$ 3.170e-01 & 2.433e+00 $\pm$ 4.286e-01 \\
MGBO & 5.380e-03 $\pm$ 2.071e-03 & 9.744e-02 $\pm$ 3.300e-02 & 2.721e-01 $\pm$ 6.114e-02 & 5.311e-01 $\pm$ 8.085e-02 & 1.122e+00 $\pm$ 6.290e-02 \\
Relax & 7.075e-03 $\pm$ 2.365e-03 & 5.804e-02 $\pm$ 2.072e-02 & 2.239e-01 $\pm$ 7.181e-02 & 4.727e-01 $\pm$ 1.190e-01 & 1.118e+00 $\pm$ 3.288e-02 \\
AM & 4.945e-04 $\pm$ 1.282e-04 & 2.120e-03 $\pm$ 5.385e-04 & 2.566e-03 $\pm$ 4.840e-04 & 5.849e-03 $\pm$ 1.977e-03 & 1.074e-02 $\pm$ 3.059e-03 \\
ABMO & \textbf{3.251e-04 $\pm$ 1.017e-04} & \textbf{1.640e-03 $\pm$ 5.116e-04} & \textbf{1.913e-03 $\pm$ 3.694e-04} & \textbf{4.974e-03 $\pm$ 1.874e-03} & \textbf{9.235e-03 $\pm$ 2.544e-03} \\
\bottomrule
\end{tabular}%
}
\end{table*}

\begin{table}[htbp]
\centering
\caption{Hadamard feasibility success-rate thresholds over 30 independent runs. Entries are the first observation ratios attaining at least 50\% or 100\% success under $\|X^\top X-nI_n\|_F<10^{-2}$; `--' means the target was not reached.}
\label{tab:hadamard_phase_transition}
\scriptsize
\setlength{\tabcolsep}{4pt}
\begin{tabular}{lcccccccc}
\toprule
\multirow{2}{*}{Method} & \multicolumn{2}{c}{$n=32$} & \multicolumn{2}{c}{$n=48$} & \multicolumn{2}{c}{$n=64$} & \multicolumn{2}{c}{$n=96$} \\
\cmidrule(lr){2-3} \cmidrule(lr){4-5} \cmidrule(lr){6-7} \cmidrule(lr){8-9}
 & $\rho_{50\%}$ & $\rho_{100\%}$ & $\rho_{50\%}$ & $\rho_{100\%}$ & $\rho_{50\%}$ & $\rho_{100\%}$ & $\rho_{50\%}$ & $\rho_{100\%}$ \\
\midrule
SPBO & \textbf{0.540} & \textbf{0.610} & \textbf{0.524} & \textbf{0.556} & \textbf{0.524} & \textbf{0.546} & \textbf{0.513} & \textbf{0.530} \\
MGBO & 0.605 & 0.674 & 0.583 & 0.793 & 0.562 & 0.615 & 0.551 & 0.723 \\
Relax & 0.567 & 0.793 & 0.556 & 0.793 & 0.546 & 0.766 & 0.535 & 0.766 \\
AM & -- & -- & -- & -- & -- & -- & -- & -- \\
ABMO & -- & -- & -- & -- & -- & -- & -- & -- \\
\bottomrule
\end{tabular}
\end{table}

\FloatBarrier

\section{Conclusion}
\label{sec:conclusion}

We studied binary orthogonal optimization through an exact box-constrained
reformulation.  When $\mathcal F_{\rm BO}$ is nonempty, we established local
and global Lipschitz error bounds with an optimal exponent.  The
orthogonality residual bounds the distance to the discrete feasible set and,
below an explicit threshold, certifies exact feasibility after sign rounding.
These bounds also yield a globally exact nonsmooth orthogonality-residual
penalty over the box and a uniformly sharp local penalty threshold.

We developed SPBO, a smoothing penalty method for the box-constrained formulation. 
We quantified the uniform smoothing error,
established first-order complexity for each fixed
smooth subproblem, and proved stationarity of continuation accumulation
points under increasingly accurate subproblem solves.  Since stationarity
alone does not ensure feasibility, we use the residual threshold to certify
feasibility of rounded outputs.  Numerically, SPBO achieved the smallest
orthogonality violation after rounding in all 15 projection and all 20
supervised-hashing configurations, by factors of at least $3.24$ and $33.7$,
respectively, relative to the next-best method.  In Hadamard completion, it
reached the reported numerical-feasibility success levels at lower observation
ratios.  These improvements involve tradeoffs in objective value and CPU time.

\appendix
\section{A One-Sided Linearized Residual Bound}

The following result complements the local error bound by showing that every
nonzero tangent direction at a feasible vertex changes the linearized
orthogonality residual at least linearly.

\begin{proposition}
\label{prop:one_sided_coercivity}
For any $X\in\mathcal F_{\rm BO}$,
\[
 \|DC(X)[H]\|_F\ge2\|H\|_F
 \qquad\forall H\in T_\Omega(X).
\]
\end{proposition}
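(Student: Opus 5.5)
The plan is to use only the diagonal of $DC(X)[H]$ together with the sign structure of the tangent cone, and to discard the off-diagonal entries as nonnegative contributions. Fix $X\in\mathcal F_{\rm BO}$ and $H\in T_\Omega(X)$. Then $X\in\mathcal B$ and $X\circ H\le0$ entrywise. Write $M:=DC(X)[H]=X^\top H+H^\top X$. Its $j$th diagonal entry is
\[
 M_{jj}=2X_{\cdot j}^\top H_{\cdot j}=2\sum_{i=1}^n X_{ij}H_{ij}.
\]
Every summand satisfies $X_{ij}H_{ij}\le0$, and since $X_{ij}^2=1$ we have $|X_{ij}H_{ij}|=|H_{ij}|$. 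Hence
\[
 |M_{jj}|=2\sum_{i=1}^n|H_{ij}|=2\|H_{\cdot j}\|_1 .
\]
No cancellation can occur, because all terms in the sum share the same sign.

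Next we keep only the diagonal part of $M$ and compare the column norms. This gives
\[
 \|M\|_F^2\ge\sum_{j=1}^r M_{jj}^2=4\sum_{j=1}^r\|H_{\cdot j}\|_1^2\ge4\sum_{j=1}^r\|H_{\cdot j}\|_2^2=4\|H\|_F^2,
\]
where we use $\|v\|_1\ge\|v\|_2$ for each column. Taking square roots yields the claim.

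The only point needing care is the sign bookkeeping. The inequality $X\circ H\le0$ must be used to turn $|\sum_i X_{ij}H_{ij}|$ into $\sum_i|H_{ij}|$. This is exactly where the one-sidedness enters: the bound fails for general $H$, since for example $H=XA$ with $A$ skew-symmetric gives $M=0$. The argument is the linearized analogue of the estimate $a_j(X)\ge(2-\eta)\sum_i d_{ij}$ in Proposition~\ref{prop:refined_local_eb}. The constant $2$ matches the asymptotic modulus $1/2$ there.
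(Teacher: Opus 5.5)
Your proof is correct and is essentially the paper's argument. The paper sets $D=-X\circ H\ge0$, so that $D_{ij}=|H_{ij}|$, and keeps only the diagonal entries $2X_{\cdot j}^\top H_{\cdot j}=-2\sum_i D_{ij}$; it then uses $(\sum_i D_{ij})^2\ge\sum_i D_{ij}^2$, which is your $\|H_{\cdot j}\|_1\ge\|H_{\cdot j}\|_2$ step.
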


\begin{proof}
For $H\in T_\Omega(X)$, set $D=-X\circ H\ge0$.  The diagonal entries of
the linearized residual satisfy
\[
 [DC(X)[H]]_{jj}
 =2X_{\cdot j}^\top H_{\cdot j}
 =-2\sum_{i=1}^nD_{ij}.
\]
Consequently,
\[
 \|DC(X)[H]\|_F^2
 \ge4\sum_{j=1}^r\left(\sum_iD_{ij}\right)^2
 \ge4\sum_{i,j}D_{ij}^2
 =4\|H\|_F^2.
\]
\end{proof}

\bibliographystyle{plain}
\bibliography{mybibfile}

\end{document}